 \documentclass[accepted]{uai2021} % after acceptance, for a revised
                                    % version; also before submission to
                                    % see how the non-anonymous paper
                                    % would look like
%% There is a class option to choose the math font
% \documentclass[mathfont=cm]{uai2021} % Computer Modern math instead of
                                       % ptmx, like default for UAI ≤2020
% \documentclass[mathfont=newtx]{uai2021} % newtx fonts (improves upon
                                          % ptmx; less tested, no support)
% NOTE: Only keep *one* line above as appropriate, as it will be replaced
%       automatically for papers to be published. Do not make any other
%       change above this note for an accepted version.

%% Choose your variant of English; be consistent
\usepackage[american]{babel}
% \usepackage[british]{babel}

%% Some suggested packages, as needed:
\usepackage{natbib} % has a nice set of citation styles and commands
    \bibliographystyle{plainnat}

\usepackage{tikz} % nice language for creating drawings and diagrams

% Recommended, but optional, packages for figures and better typesetting:
\usepackage{microtype}
\usepackage{graphicx}
\usepackage{subfigure}
\usepackage{booktabs} % for professional tables
\usepackage{amsmath,amssymb,array,verbatim, amsthm, dsfont,mathrsfs}
\usepackage{mathtools}

%\usepackage{enumitem}
% hyperref makes hyperlinks in the resulting PDF.
% If your build breaks (sometimes temporarily if a hyperlink spans a page)
% please comment out the following usepackage line and replace
% \usepackage{icml2020} with \usepackage[nohyperref]{icml2020} above.
\usepackage{hyperref}

% operators

\DeclareMathOperator{\ee}{\mathbb{E}}			% expected value
\DeclareMathOperator{\prob}{{P}}			% probability
		% vec
			% trace
		% covariance
		% conditional covariance
	% conditional probability
		% conditional expectation

% Attempt to make hyperref and algorithmic work together better:

%\usepackage{enumerate}
\newcounter{l1}
\newcounter{l2}
\newcounter{l3}
\newcommand{\bdotlist}{\begin{list}{$\bullet$}{}}
\newcommand{\bboxlist}{\begin{list}{$\Box$}{}}
\newcommand{\bbboxlist}{\begin{list}{\raisebox{.005in}{{\tiny $\blacksquare$ \ \ }}}{}}
\newcommand{\bdashlist}{\begin{list}{$-$}{} }
\newcommand{\blist}{\begin{list}{}{} }
\newcommand{\barablist}{\begin{list}{\arabic{l1}}{\usecounter{l1}}}
\newcommand{\balphlist}{\begin{list}{(\alph{l2})}{\usecounter{l2}}}
\newcommand{\bAlphlist}{\begin{list}{\Alph{l2}.}{\usecounter{l2}}}
\newcommand{\bdiamlist}{\begin{list}{$\diamond$}{}}
\newcommand{\bromalist}{\begin{list}{(\roman{l3})}{\usecounter{l3}}}

\newtheorem{remark}{Remark}

\newtheorem{theorem}{Theorem}
\newtheorem{lemma}{Lemma}

\newtheorem{proposition}{Proposition}

\title{Optimal communication and control strategies in a multi-agent  MDP problem}

% The standard author block has changed for UAI 2021 to provide
% more space for long author lists and allow for complex affiliations
%
% All author information is authomatically removed by the class for the
% anonymous submission version of your paper, so you can already add your
% information below.
%
% Add authors in order of decreasing contribution
\author[1]{\href{mailto:Sagar Sudhakara <sagarsud@usc.edu>?Subject=Your UAI 2021 paper}{Sagar Sudhakara}{}} % Lead author
\author[1]{Dhruva Kartik}
\author[1]{Rahul Jain}
\author[1]{Ashutosh Nayyar}

% Add affiliations after the authors
\affil[1]{%
    Electrical Engineering  Dept.\\
    University of Southern California \\
    Los Angeles, California, USA
}

\begin{document}
\maketitle

%\icmltitle{Optimal communication and control strategies in a multi-agent  MDP problem}

\begin{abstract}
The problem of controlling multi-agent systems under different models of information sharing among agents has received significant attention in the recent literature. In this paper, we consider a setup where rather than committing to a fixed information sharing protocol (e.g.  periodic sharing or no sharing  etc), agents can dynamically decide at each time step whether to share information with each other and incur the resulting communication cost. This setup requires a joint design of agents' communication and control strategies in order to optimize the trade-off between communication costs and control objective. We first show that agents can ignore a big part of their private information without compromising the system performance. We then  provide a common information approach based solution for the strategy optimization problem. This approach relies on constructing a fictitious POMDP whose solution (obtained via a dynamic program) characterizes the optimal strategies for the agents. We also show that our solution can be easily modified to incorporate constraints on when and how frequently agents can communicate.
\end{abstract}
%%Finally, we exploit certain features of our problem to show that the POMDP dynamic program can be reformulated as a MDP dynamic program with an appropriately defined state.

\section{Introduction}\label{introduction}
%\begin{abstract}
%The problem of controlling multi-agent systems under different models of information sharing among agents has received significant attention in the recent literature. In this paper, we consider a setup where rather than committing to a fixed information sharing protocol (e.g.  periodic sharing or no sharing  etc), agents can dynamically decide at each time step whether to share information with each other and incur the resulting communication cost. This setup requires a joint design of agents' communication and control strategies in order to optimize the trade-off between communication costs and control objective. We first show that agents can ignore a big part of their private information without compromising the system performance. We then  provide a common information approach based solution for the strategy optimization problem. This approach relies on constructing a fictitious POMDP whose solution (obtained via a dynamic program) characterizes the optimal strategies for the agents. We also show that our solution can be easily modified to incorporate constraints on when and how frequently agents can communicate.
%\end{abstract}

 The problem of sequential decision-making by a team of collaborative agents has received significant attention in the recent literature. The goal in such problems is to jointly design decision/control strategies for the multiple agents in order to optimize a performance metric for the team. The nature of this joint strategy optimization problem as well as the best achievable performance depend crucially on the \emph{information structure} of the problem.  Intuitively, the information structure of a multi-agent problem specifies what information is available to each agent at each time. Depending on the underlying communication environment, a wide range of information structures can arise. If communication is costless and unrestricted, all agents can share all information with each other.  If communication is too costly or physically impossible, agents may not be able to share any information at all.    It could also be the case that agents can communicate only periodically or that  the ability to communicate varies among the agents leading to one-directional communication between certain pairs of agents. Each of these communication models corresponds to a different information structure which, in turn, specifies the class of feasible decision/control strategies for the agents.

In this paper, we consider a setup where rather than committing to a fixed information sharing protocol (e.g.  periodic sharing or no sharing  etc), agents can dynamically decide at each time step whether to share information with each other and incur the resulting communication cost. Thus, at each time step, agents have to make two kinds of decisions - communication decisions  that govern the information sharing as well as control decisions that govern the evolution of the agents' states. The two kinds of agents' strategies - communication strategies  and control strategies - need to be jointly designed in order to optimize the trade-off between communication costs and control objective.  

\textbf{Contributions} (i) We first show that agents can ignore a big part of their private information without compromising the system performance. This is done by using an agent-by-agent argument where we fix the strategies of one agent arbitrarily and  find a sufficient statistic for the other agent.  This sufficient statistic turns out be a subset of the agent's private information. This reduction in private information narrows down the search for optimal strategies to a class of simpler strategies. (ii) We then  adopt the common information based solution approach for finding the optimal strategies. This approach relies on constructing an equivalent POMDP from the perspective of a fictitious coordinator that knows the common information among the agents. The solution of this POMDP (obtained via a dynamic program) characterizes the optimal strategies for the agents. (iii) Finally, we extend our setup to incorporate several constraints on when  and  how frequently agents can communicate with each other.  We show that our solution approach can be easily modified to incorporate these constraints using a natural augmentation of the state in the coordinator's POMDP.

\textbf{Related Work} There is a significant body of prior work on decentralized control and decision-making in multi-agent systems. We focus on works where the dynamic system can be viewed as a  Markov chain jointly being controlled by multiple agents/controllers.  We can organize this literature based on the underlying information structure (or the information sharing protocol). 

In Dec-MDPs and Dec-POMDPS, each agent receives a partial or noisy observation of the current system state \citep{bernstein2002complexity}. These agents cannot communicate or share their observations with each other and can only use their private action-observation history to select their control actions.
%Dec-POMDPs capture a wide class of decentralized decision-making models \citep{foerster2016learning,xie2020optimally} including ours. 
Several methods for solving such generic Dec-POMDPs exist in the literature \citep{szer2012maa,seuken2008formal,kumar2015probabilistic,dibangoye2016optimally,rashid2018qmix,hu2019simplified}. However, these generic methods either involve prohibitively large amount of computation or cannot guarantee optimality. For certain Dec-MDPs and Dec-POMDPs with additional structure, such as transition independence in factored Dec-MDPs \citep{becker2004solving} or one-sided information sharing \citep{xie2020optimally}, one can derive additional structural properties of the optimal strategy and use these properties to make the computation more tractable.

In decentralized stochastic control literature, a variety of information structures (obtained from different information sharing protocols) have been considered \citep{nayyar2010optimal, nayyar2013decentralized, mahajan2013optimal}. For example, \citep{nayyar2010optimal} considers the case where agents share their information with each other with a fixed delay; \citep{nayyar2013decentralized} provides a unified treatment for a range of information sharing protocols including periodic sharing, sharing of only control actions etc.  \citep{mahajan2013optimal,foerster2019bayesian} consider a setup where only the agents' actions are shared with others. 

In emergent communication, agents have access to a cheap talk channel which can be used for communication. \citep{sukhbaatar2016learning,foerster2016learning,cao2018emergent} propose methods for jointly learning the control and communication strategy in such settings. The key communication issue in these works is to design the most effective way of encoding the available information into the communication alphabet. In contrast, the communication issue in our setup is whether the cost of sharing states is worth the potential control benefit.

In this paper, we consider a model where the information sharing protocol is not fixed a priori. At each time, agents in our model make an explicit choice regarding sharing their information with each other. We seek to jointly design this information sharing strategy and the agents' control strategies. 
This problem and many of the problems considered in the prior literature can be reduced to Dec-POMDPs by a suitable redefinition of states, observations and actions. However, as demonstrated in \citep{xie2020optimally}, a generic Dec-POMDP based approach for problems with (limited) inter-agent communication involves very large amount of computation since it ignores the underlying communication structure. Instead, we derive some structural properties of the strategies that significantly simplify the strategy design. We then provide a dynamic program based solution using the common information approach. To the best of our knowledge, our information sharing mechanism has not been analyzed before.

\textbf{Notation}
Random variables are denoted with upper case letters( $X$, $Y$, etc.), their realization with lower case letters ($x$, $y$, etc.), and their space of realizations by script letters ($\mathcal{X}$, $\mathcal{Y}$, etc.). Subscripts denote time and superscripts denote the subsystem; e.g., $X^i_t$ denotes the state of subsystem $i$ at time $t$. The short hand notation $X^i_{1:t}$ denotes the vector $(X^i_1,X^i_2,...,X^i_t)$. 
The face letters denote the collection of variables at all subsystems; e.g. ${X}_t$, denotes $(X^1_t,X^2_t)$. $\triangle(\mathcal{X})$ denotes the probability simplex for the space $\mathcal{X}$. $\prob(A)$ denotes the probability of an event $A$. $\ee[X]$ denotes the expectation of a random variable $X$. $\mathds{1}[x=y]$ denotes the indicator function of the statement $x=y$, i.e. $\mathds{1}[x=y]$, is $1$ if $x=y$ and $0$ otherwise. For simplicity of notation, we use $\prob(x_{1:t},u_{1:t-1})$ to denote $\prob(X_{1:t}=x_{1:t},U_{1:t-1}=u_{1:t-1})$ and a similar notation for conditional probability. We use $-i$ to denote agent/agents other than agent $i$.  

\section{Problem Formulation}\label{sec:problem_formulation}
%\red{bold symbols have been used inconsistently to denote pairs; I suggest just not using bold}

 Consider a discrete-time  control system with two agents. Let $X^i_t \in \mathcal{X}^i$ denote the local state of agent $i$. $X_t:=(X^1_t,X^2_t)$ represents the local state of both agents. The initial local states of all agents are independent random variables with initial local state $X^i_1$ having the probability distribution $P_{X^i_1}$, $i=1,2$. Each agent perfectly observes its own local state.  Let $U^i_t \in \mathcal{U}^i$ denote the control action of agent $i$ at time $t$ and $U_t:=(U^1_t,U^2_t)$ denote the collection of all control actions at time $t$. 
 The local state of agent $i, i=1,2$, evolves according to 
\begin{equation}
    X^i_{t+1}=k^i_t(X^{i}_{t},U^i_t,W^i_t) \label{dyna}
\end{equation}
where $W^i_t \in \mathcal{W}^i$ $i, i=1,2$ is the disturbance in dynamics with  probability distribution $P_{W^i}$. The initial state $X_1$ and the disturbances  $\{W^i_t\}_{t=1}^{\infty}$, $i=1,2$, are independent random variables.
Note that the next local state of agent $i$ depends on the current local state and control action of agent $i$.

In addition to deciding the control actions at each time, the two agents need to decide whether or not to initiate communication at each time. We use the binary variable $M^i_t$ to denote  the communication decision taken by agent $i$. If either agent initiates communication (i.e., if $M^i_t=1$ for any $i$), then the agents share their local state information with each other.
% In other words,  if we def $M^{or}_t:=(M^1_t\vee M^2_t)$. If $M^{or}_t=1$, then local states are communicated between the agents and for $M^{or}_t=0$ local states are not communicated between agents.
% We consider the full observation model, control station $i$ perfectly observes the local state $X^i_{1:t}$. In addition to the observation of the state of its agent, each control station perfectly observes the one-step delayed control actions $U_{1:t-1}$ and the local states of other agent if $M^{or}_t=1$ i.e. communication of state variables takes place between agents when $M^{or}_t=1$. Let's define variable $Z_t$ as:
Let $M^{or}_t:=\max(M^1_t, M^2_t)$ and let $Z_t$ represent the information exchanged between the agents at time $t$. Then, based on the communication model described above, we can write
\begin{equation}\label{zt}
  Z_t=\begin{cases}
    X^{1,2}_t, & \text{if $M^{or}_t=1$}.\\
    \phi, & \text{if $M^{or}_t=0$}.
  \end{cases}
\end{equation}
\textbf{Information structure and decision strategies:}
At the beginning of the $t$-th time step, the information available to agent $i$  is given by 
\begin{equation}
    I^i_{t}=\{X^i_{1:t},U^i_{1:t-1},Z_{1:t-1},M^{1,2}_{1:t-1}\}.
\end{equation}
Agent $i$ can use this information to make its communication decision at time $t$. Thus, $M^i_t$ is chosen as a function of $I^i_t$ according to
%The control stations perfectly recall all the data they observe. Thus, in the full observation model, control station $i$ chooses a communication action according to 
\begin{equation}\label{commact}
    M^i_{t}=f^i_t(I^i_{t}),
\end{equation}
where the function $f^i_t$ is referred to as the communication strategy of agent $i$ at time $t$.
After the communication decisions are made and the resulting communication (if any) takes place, the information available to  agent $i$ is 
\begin{equation}
    I^i_{t^+}=\{I^i_{t},Z_t,M^{1,2}_t\}.
\end{equation} 
Agent $i$ then chooses its control action according to 
\begin{equation}\label{controlact}
    U^i_{t}=g^i_t(I^i_{t^+}),
\end{equation}
where the function $g^i_t$ is referred to as the control strategy of agent $i$ at time $t$.
%The function $f^i_t$, $g^i_t$ are called the communication law and control law of control station $i$ respectively.

 $f^i:=(f^i_1,f^i_2,...,f^i_T)$ and  $g^i:=(g^i_1,g^i_2,...,g^i_T)$
are called the communication and control strategy of agent $i$ respectively.
% The collection $f:=(f^1,f^2)$, $g:=(g^1,g^2)$ of communication and control strategies of all agents are called the communication and control strategy of the system.

\textbf{Strategy optimization problem:} At time $t$, the system incurs a cost $c_t(X^1_t,X^2_t,U^1_t,U^2_t)$ that depends on the local states and control stations of both agents. Thus, the agents are coupled through cost. In addition, a communication cost $\rho$ is incurred each time the agents share local states with each other.
The system runs for a time horizon $T$. The objective is to find communication and control strategies for the two agents in order to minimize the expected value of the  sum of control and communication costs over the time horizon $T$:
\begin{equation}\label{eq:cost}
   \ee\Big[\sum_{t=1}^{T}c_t({X}_t,{U}_t)+\rho\mathds{1}_{\{M^{or}_t=1\}}\Big].
\end{equation}
%where the expectation is with respect to a joint measure of $(\textbf{X}_{1:T},\textbf{U}_{1:T},\textbf{M}_{1:T})$ induced by the choice of the communication and control strategy $f$, $g$. We are interested in the following optimal control problem.

%\textbf{Problem 1}: Give the distributions $P_{X^i}$, $P_{W^i}$ of the initial local state and plant disturbance of agent $i$, $i=1,2$, a horizon $T$, the cost function $c_t$ and communication cost $\rho$, $t=1,2,...,T$, find a control strategy $g$ and communication strategy $f$ that minimizes the expected total cost given by (6).
%The above model and optimization problem arise in a variety of Reinforcement learning and communication applications.

\section{Preliminary Results and Simplified Strategies}\label{Preliminary_Results}
In this section we show that agents can ignore parts of their information without losing optimality. This removal of information narrows the search for optimal strategies to a class of simpler strategies and is a key step in our approach for finding optimal strategies. To proceed, we first split the information available to the agents into two parts -- common information (which is available to both agents) and private information (which is everything except the common information):
\begin{enumerate}
\item At the beginning of time step $t$, before the communication decisions are made, the common information is defined as 
\begin{align}
C_t:=(Z_{1:t-1},M^{1,2}_{1:t-1}).\label{commoninfo}
\end{align}
\item After the communication decisions are made and the resulting communication (if any) takes place, the common information is defined as 
\begin{align}
C_{t^+}=(Z_{1:t},M^{1,2}_{1:t}).\label{commoninfoplus}
\end{align}
\end{enumerate}
The following lemma establishes a key conditional independence property that will be critical for our analysis.
% the independence proposition stated below and person-by-person approach to show that the past values of the local state $X^i_{1:t-1}$ and control actions $U^i_{1:t-1}$ are irrelevant at control station for agent $i$ at time $t$. To pursue this we split the data at each control station into two parts: the common data(or shared data) $C_t=(Z_{1:t-1},M_{1:t-1})$ (before communication) and $C_{t^+}=(Z_{1:t},M_{1:t})$ (after communication) which is observed by all control stations.
\begin{lemma}[Conditional independence property] \label{LEM:INDEPEN}
Consider any arbitrary choice of agents' strategies. Then, at any time $t$, the two agents'  local states  and control actions   are conditionally independent given the common information $C_t$ (before communication) or $C_{t^+}$ (after communication). That is, if $c_t, c_{t^+}$ are the realizations of the common information before and after communication respectively, then for any realization ${x}_{1:t}, {u}_{1:t-1}$ of states and actions, we have
\begin{equation}\label{eq:indepen}
    \prob({x}_{1:t},{u}_{1:t-1}|c_t)=\displaystyle\prod_{i=1}^{2} \prob(x^{i}_{1:t},{u}^{i}_{1:t-1}|c_t), 
    \end{equation}
    \begin{equation}\label{eq:indepen2}
      \prob({x}_{1:t},{u}_{1:t}|c_{t^+})=\displaystyle\prod_{i=1}^{2} \prob(x^{i}_{1:t},{u}^{i}_{1:t}|c_{t^+}).  
    \end{equation}
    Further, $\prob(x^{i}_{1:t},{u}^{i}_{1:t-1}|c_t)$ depends on only on agent $i$' strategy and not on the strategy of agent $-i$.
\end{lemma}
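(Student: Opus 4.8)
The plan is to prove both \eqref{eq:indepen} and \eqref{eq:indepen2} simultaneously by induction on $t$, treating them as two interlocking claims: claim $A(t)$ for the pre-communication statement \eqref{eq:indepen} and claim $B(t)$ for the post-communication statement \eqref{eq:indepen2}. The induction cycles as $A(t)\Rightarrow B(t)\Rightarrow A(t+1)$. The base case $A(1)$ is immediate: $C_1$ is empty and the initial states $X^1_1,X^2_1$ are independent by assumption, so $\prob(x^1_1,x^2_1)=P_{X^1_1}(x^1_1)P_{X^2_1}(x^2_1)$.

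For the step $A(t)\Rightarrow B(t)$, I would first pass from $c_t$ to $c_{t^+}=(c_t,z_t,m^{1,2}_t)$ and then append the control actions $u_t$. The crucial observation is that, given $c_t$, each communication decision $M^i_t=f^i_t(x^i_{1:t},u^i_{1:t-1},c_t)$ and each control action $U^i_t=g^i_t(x^i_{1:t},u^i_{1:t-1},c_{t^+})$ is a deterministic function of \emph{only} agent $i$'s private variables together with the common information. Writing the posterior via Bayes' rule, the extra conditioning contributes a consistency indicator that factorizes across agents; combined with the product form supplied by $A(t)$ and the fact that the normalizing constant $\prob(z_t,m^{1,2}_t\mid c_t)$ itself splits into a product of per-agent marginals (being the sum of a product of per-agent factors), this yields $B(t)$.

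The step $B(t)\Rightarrow A(t+1)$ rests on the identity $C_{t+1}=C_{t^+}$: no new common information is created between the control stage and the next time step, since the control actions and the new local states are private. I therefore only need to append $x_{t+1}$ to the variable list while keeping the conditioning fixed. Here the decoupled dynamics \eqref{dyna} and the independence of the disturbances $W^1_t,W^2_t$ give $\prob(x_{t+1}\mid x_{1:t},u_{1:t},c_{t+1})=\prod_i \prob(x^i_{t+1}\mid x^i_t,u^i_t)$, and multiplying by $B(t)$ recovers $A(t+1)$.

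The main obstacle is the step $A(t)\Rightarrow B(t)$ in the case $m^{or}_t=1$, where communication actually occurs and $Z_t=X^{1,2}_t$ is revealed: the conditioning event $\{Z_t=(x^1_t,x^2_t)\}$ is a priori a joint constraint coupling the two agents, so it is not obvious the product form survives. The resolution is that in the sharing case this indicator splits as $\mathds{1}[Z_t=(x^1_t,x^2_t)]=\prod_i\mathds{1}[X^i_t=z^i_t]$, a separate constraint on each agent's current state; together with the per-agent decision indicators $\mathds{1}[M^i_t=m^i_t]$, the entire consistency factor is a product over $i$. Finally, the closing claim — that $\prob(x^i_{1:t},u^i_{1:t-1}\mid c_t)$ depends on agent $i$'s strategy alone — drops out of the same bookkeeping: at every stage the $i$-th factor is updated using only $f^i,g^i,k^i$ and $P_{W^i}$ and a per-agent normalizer, with no quantity from agent $-i$'s strategy ever entering (in particular, the revealed state $z^{-i}_t$ never appears in agent $i$'s factor).
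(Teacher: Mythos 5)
Your proposal is correct and takes essentially the same route as the paper's own proof: an interlocked induction $A(t)\Rightarrow B(t)\Rightarrow A(t+1)$ in which the post-communication step is handled via Bayes' rule with per-agent decision indicators plus a case split on whether communication occurred (with the revealed-state indicator factorizing as $\prod_i \mathds{1}[x^i_t=z^i_t]$), and the pre-communication step at $t+1$ follows from the decoupled dynamics, independent disturbances, and $C_{t+1}=C_{t^+}$. The only cosmetic difference is that you justify the factorization of the normalizing constant by a sum-of-products observation, whereas the paper verifies the equivalent identity by explicit case-by-case computation; the substance is identical.
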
    
\begin{proof}
See Appendix \ref{proof:CI}.
\end{proof}

The following proposition shows that agent $i$ at time $t$ can ignore its past states and actions, i.e. $X^i_{1:t-1}$ and $U^i_{1:t-1}$, without losing optimality. This allows agents to use simpler strategies where the communication and control decisions are functions only of the current state and the common information.
\begin{proposition} \label{PROP:ONE}
 Agent $i$, $i=1,2,$ can restrict itself to strategies of the form below
\begin{equation} \label{eq:structure1}
   M^i_t= \bar{f}^i_t(X^i_{t},C_t)
\end{equation}
%M^i_t= \bar{f}^i_t(X^i_{t},Z_{1:t-1},M^{1,2}_{1:t-1})
% U^i_t= \bar{g}^i_t(X^i_{t},Z_{1:t},M^{1,2}_{1:t})
\begin{equation}  \label{eq:structure2}
   U^i_t= \bar{g}^i_t(X^i_{t},C_{t^+})
\end{equation}
without loss of optimality.  In other words, at time $t$, agent $i$ does not need the past local states and actions, $X^i_{1:t-1},U^i_{t-1}$, for making optimal decisions.
\end{proposition}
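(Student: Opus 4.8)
The plan is to use an agent-by-agent (person-by-person) argument. Starting from a globally optimal strategy profile, I would fix the strategy of agent $-i$ to be an arbitrary (not necessarily simplified) strategy and view the resulting problem as a single-agent sequential decision problem faced by agent $i$. In this reduced problem I would identify a sufficient statistic for agent $i$'s information and show that it is a function only of $(X^i_t, C_t)$ before communication and of $(X^i_t, C_{t^+})$ after communication. Standard dynamic-programming theory for partially observed control problems then guarantees that an optimal response for agent $i$ can be chosen as a function of this sufficient statistic, i.e., of the form \eqref{eq:structure1}--\eqref{eq:structure2}. Applying this first to agent $1$ (with agent $2$ fixed) and then to agent $2$ (with the now-simplified agent $1$ fixed) yields a globally optimal profile in which both agents use the simplified strategies; this second step is legitimate because Lemma~\ref{LEM:INDEPEN} holds for \emph{every} strategy profile, in particular for the one in which agent $1$ has already been simplified.

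The crux is the construction of the sufficient statistic, and here the conditional independence property of Lemma~\ref{LEM:INDEPEN} does the heavy lifting. With agent $-i$'s strategy fixed, the quantity unknown to agent $i$ is the \emph{entire} private history $(X^{-i}_{1:t}, U^{-i}_{1:t-1})$ of agent $-i$ (not merely its current state, since agent $-i$'s fixed communication and control maps may depend on its whole history). I would first show that agent $i$'s posterior over this hidden history, conditioned on its full information $I^i_t = \{X^i_{1:t}, U^i_{1:t-1}, C_t\}$, collapses to a function of the common information alone:
\begin{equation}
\prob(x^{-i}_{1:t}, u^{-i}_{1:t-1} \mid I^i_t) = \prob(x^{-i}_{1:t}, u^{-i}_{1:t-1} \mid C_t),
\end{equation}
which follows by dividing the factorized joint law \eqref{eq:indepen} by its agent-$i$ marginal. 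An identical computation, using \eqref{eq:indepen2} after marginalizing the time-$t$ actions, gives the analogous statement with $I^i_{t^+}$ and $C_{t^+}$. Consequently the information state of agent $i$ --- the pair consisting of the perfectly observed $X^i_t$ together with the posterior over agent $-i$'s hidden history --- is indexed by $(X^i_t, C_t)$, respectively $(X^i_t, C_{t^+})$.

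It then remains to verify that, in the single-agent problem faced by agent $i$, both the per-stage cost and the transition of the information state depend only on this sufficient statistic and on agent $i$'s own action. For the cost, I would write the conditional expectation $\ee[c_t(X_t, U_t) \mid I^i_{t^+}, U^i_t]$ by averaging $c_t$ over agent $-i$'s hidden history and its induced action $U^{-i}_t = g^{-i}_t(I^{-i}_{t^+})$ using the posterior above; since that posterior is a function of $C_{t^+}$ and $g^{-i}_t$ is fixed, the expected cost is a function of $(X^i_t, C_{t^+}, U^i_t)$, and similarly the expected communication cost $\rho\,\ee[\mathds{1}_{\{M^{or}_t = 1\}} \mid I^i_t, M^i_t]$ is a function of $(X^i_t, C_t, M^i_t)$. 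For the transition, $X^i_{t+1} = k^i_t(X^i_t, U^i_t, W^i_t)$ is Markov in agent $i$'s own state and action, while $C_{t+1} = C_{t^+} = (C_t, Z_t, M^{1,2}_t)$ updates through $M^i_t$ (known), $M^{-i}_t$ and $Z_t$, whose conditional law given $(X^i_t, C_t, M^i_t)$ is computable from the common-information posterior. This makes $(X^i_t, C_t)$ a controlled Markov state, so dynamic programming yields a best response of the claimed form.

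I expect the main obstacle to be the bookkeeping in this last step rather than any conceptual difficulty: one must handle the two decision epochs ($t$ for communication, $t^+$ for control) and the corresponding beliefs carefully, and must verify that when communication occurs ($M^{or}_t = 1$) the revealed value $Z_t = X^{1,2}_t$ enters the belief update consistently, while when it does not occur the belief propagates through agent $-i$'s fixed strategy and dynamics using only quantities measurable with respect to $C_t$. The conceptual content, however, is entirely contained in the conditional independence of Lemma~\ref{LEM:INDEPEN}: it is precisely what guarantees that agent $i$'s private past $(X^i_{1:t-1}, U^i_{1:t-1})$ carries no information about agent $-i$ beyond what the common information already provides, and hence can be discarded.
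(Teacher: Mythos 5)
Your proposal is correct and follows essentially the same route as the paper: fix agent $-i$'s strategy arbitrarily, use the conditional independence of Lemma~\ref{LEM:INDEPEN} to show that $(X^i_t, C_t)$ (resp.\ $(X^i_t, C_{t^+})$) is a controlled Markov state with strategy-independent transition kernels and expected costs, and invoke standard MDP results to conclude that Markovian strategies are optimal. The paper phrases this by directly establishing the controlled-Markov-process and cost facts for $R^i_t=(X^i_t,Z_{1:t-1},M^{1,2}_{1:t-1})$ rather than through your posterior-collapse formulation, but the content is identical; your explicit two-step person-by-person iteration is merely left implicit in the paper.
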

\begin{proof}
To  prove this result, we fix agent $-i$'s strategy to an arbitrary choice and then show that agent $i$'s  decision problem can be modeled as an MDP in a suitable state space. The result then follows from the fact that Markovian strategies are optimal in an MDP.  See Appendix \ref{proof:prop1} for details.
\end{proof}
%Thus, the past values of local state and control actions are irrelevant at control
%station for agent $i$ at time $t$. However, even after shedding $X^i_{1:t-1}$ and $U^i_{1:t-1}$ the data at each control station is still increasing with time.

\section{Centralized Reformulation Using Common Information}\label{Coordinator Results}
In this section, we provide a centralized reformulation of the multi-agent strategy optimization problem using the common information approach of \citep{nayyar2013decentralized}.
 The main idea of the proof is to formulate an equivalent single-agent POMDP problem; solve the equivalent POMDP using a dynamic program; and translate the results back to the original problem.
 
Because of Proposition \ref{PROP:ONE}, we will only consider strategies of the form given in \eqref{eq:structure1} and \eqref{eq:structure2}. 
 %Note that the common information is increasing with time (i.e., $C_t\subset C_{t+1}$ ), while the local information $X^i_t$ for all agents has a fixed size. Thus, the system has partial history sharing information structure with finite local memory. Nayyar.et al. [] derived structural properties of optimal controllers and a dynamic programming decomposition for such an information structure.\\
%To present the result, we proceed as follows:\\
Following the approach in \citep{nayyar2013decentralized},  we construct an equivalent problem   by adopting  the point of view of a fictitious coordinator that observes only the common information among the agents ( i.e., the coordinator observes  $C_t$ before communication and $C_{t^+}$  after $Z_t$ happens), but not the current local state (i.e., $X^i_t,i=1,2$).
Before communication at time $t$, the coordinator chooses a pair of  \emph{prescriptions}, $\Gamma_t:=(\Gamma^1_t,\Gamma^2_t)$,
where $\Gamma^i_t$ is a mapping from $X^i_t$ to $M^i_t$ (more precisely,  $\Gamma^i_t$ maps $\mathcal{X}^i$ to $\{0,1\}$). The interpretation of the prescription is that it is a directive for the agents about how they should use their local state information to make the communication decisions. Thus, agent $i$ generates its communication decision by evaluating the function $\Gamma^i_t$ on its current local state: 
\begin{align}
    &M^i_t=\Gamma^i_t(X^i_t).\label{commact1}
    \end{align}   
Similarly, after the communication decisions are made and $Z_t$ is realized, the coordinator chooses a     pair of  \emph{prescriptions}, $\Lambda_t:=(\Lambda^1_t,\Lambda^2_t)$,
where $\Lambda^i_t$ is a mapping from $X^i_t$ to $U^i_t$ (more precisely,  $\Lambda^i_t$ maps $\mathcal{X}^i$ to $\mathcal{U}^i$). Agent $i$ then generates its control action by evaluating the function $\Lambda^i_t$ on its current local state: 
\begin{align}
    &U^i_t=\Lambda^i_t(X^i_t).\label{controlact1}
    \end{align}   
The coordinator chooses its prescriptions based on the common information. Thus, 
  \begin{align}
    &\Gamma^1_t=d^1_t(C_t), ~~ \Gamma^2_t=d^2_t(C_t),\notag\\
    &\Lambda^1_t=d^1_{t^+}(C_{t^+}), ~~ \Lambda^2_t=d^2_{t^+}(C_{t^+}),
\end{align}
where $d^1_t, d^2_t, d^1_{t^+}, d^2_{t^+}$ are referred to as \emph{coordinator's communication and control strategy for the two agents at time $t$}. The collection of functions $(d^1_1, d^2_1,d^1_{1^+},...,d^1_{T^+},d^2_{T^+})$ is called the coordinator's strategy.  The coordinator's strategy optimization problem is to find a coordination strategy to minimize the expected total cost given by \eqref{eq:cost}.
The following Lemma shows the equivalence of the coordinator's strategy optimization problem and  the original strategy optimization problem for the agents.

\begin{lemma}\label{lem:equi}
Suppose that $(d^{1*}_1, d^{2*}_1,...,d^{1*}_{T^+},d^{2*}_{T^+})$ is the optimal strategy for the coordinator. Then, optimal communication and control strategies for the agents in the original problem can be obtained as follows: for $i=1,2$,
\begin{align}
&\bar{f}^{i*}_t(X^i_{t},C_t) = \Gamma^i_t(X^i_t) ~~ \mbox{where~}  \Gamma^i_t=d^{i*}_t(C_t), \\
&\bar{g}^{i*}_t(X^i_{t},C_t) = \Lambda^i_t(X^i_t) ~~ \mbox{where~}  \Lambda^i_t=d^{i*}_{t^+}(C_t).
\end{align}
\end{lemma}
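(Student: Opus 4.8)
The plan is to establish a cost-preserving, invertible correspondence between the class of simplified agent strategies identified in Proposition~\ref{PROP:ONE} and the class of coordinator strategies, and then to invoke Proposition~\ref{PROP:ONE} to transfer optimality. Concretely, I would first set up the two-way map between strategies. Given a coordinator strategy $(d^i_t, d^i_{t^+})_{i,t}$, define agent strategies by the formulas in the statement, i.e.\ $\bar{f}^i_t(x, c) := [d^i_t(c)](x)$ and $\bar{g}^i_t(x, c) := [d^i_{t^+}(c)](x)$. Conversely, given any simplified agent strategy $(\bar{f}^i_t, \bar{g}^i_t)$ of the form in \eqref{eq:structure1}--\eqref{eq:structure2}, define a coordinator strategy by $d^i_t(c) := \bar{f}^i_t(\cdot, c)$ and $d^i_{t^+}(c) := \bar{g}^i_t(\cdot, c)$, so that the prescription is the partial function obtained by fixing the common-information argument. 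The point making both maps well-defined is that the common information $C_t$ (resp.\ $C_{t^+}$) is exactly the argument available to the coordinator, while the local state $X^i_t$ is supplied by agent $i$ at execution time; I would check that these two maps are mutual inverses on the respective strategy classes.

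Second, and this is the crux, I would prove that under any pair of corresponding strategies the two systems --- the original decentralized system and the coordinated system --- induce the same joint probability distribution over the whole trajectory $(X_{1:T}, U_{1:T}, M_{1:T}, Z_{1:T})$. I would argue this by induction on $t$. The base case uses the common prior $P_{X^i_1}$. For the inductive step I would verify, in order: (i) that the common information $C_t$ is generated identically in both systems, since it is a deterministic function of past $Z$ and $M$; (ii) that, given a realization of $C_t$, the communication decision $M^i_t$ is the same random variable in both systems, because $\Gamma^i_t = d^i_t(C_t)$ evaluated at $X^i_t$ coincides with $\bar{f}^i_t(X^i_t, C_t)$ --- and likewise $Z_t$, $C_{t^+}$, and the control action $U^i_t$ match; and (iii) that the state transition \eqref{dyna} is driven by the same exogenous disturbances $W^i_t$ with an identical conditional law. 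Since each one-step update and each action-generation rule agree, the induced trajectory laws coincide.

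Third, because the objective \eqref{eq:cost} is a fixed measurable function of the trajectory, equality of trajectory distributions yields equality of expected costs for corresponding strategies. Hence the optimal value of the coordinator's problem equals the optimal value over simplified agent strategies, which by Proposition~\ref{PROP:ONE} equals the optimal value over \emph{all} agent strategies. Applying the map to the optimal coordinator strategy $(d^{i*}_t, d^{i*}_{t^+})$ therefore produces agent strategies that attain this common optimal value, which is exactly the assertion of the lemma.

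I expect the main obstacle to be the inductive equality-in-distribution argument of the second step: one must carefully track that the conditioning information and the action-generation mechanism are genuinely identical at each time, most delicately at the instant the communication decision is made and $Z_t$ (hence $C_{t^+}$) is realized, since this is the point where the two systems' information flows could in principle diverge. The conditional independence structure of Lemma~\ref{LEM:INDEPEN} is what guarantees that the per-agent prescriptions applied to the private states $X^i_t$ suffice to reproduce the joint law, and I would lean on it to close the induction.
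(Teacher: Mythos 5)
Your proposal is correct and takes essentially the same approach as the paper: the paper's proof is a one-line appeal to \citep{nayyar2013decentralized}, and the argument invoked there is precisely your strategy correspondence, followed by equality of induced trajectory laws, followed by transfer of optimality via Proposition~\ref{PROP:ONE}. One refinement: your closing appeal to Lemma~\ref{LEM:INDEPEN} is unnecessary, because under corresponding strategies the identity $[d^i_t(C_t)](X^i_t)=\bar{f}^i_t(X^i_t,C_t)$ (and its analogue for the control prescriptions) makes the two systems' trajectories coincide realization-by-realization as deterministic functions of the primitive random variables $(X_1,W^{1,2}_{1:T})$, so equality of the joint laws is a sample-path identity requiring no conditional-independence input; Lemma~\ref{LEM:INDEPEN} instead plays its role later, in factorizing the coordinator's belief (Lemma~\ref{LEM:UPDATE}), not in this equivalence.
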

\begin{proof}
The lemma is a direct consequence of the results in \citep{nayyar2013decentralized}.
\end{proof}

Lemma \ref{lem:equi} implies that the agents' strategy optimization problem can be solved by solving the coordinator's strategy optimization problem. The advantage of the coordinator's problem is that it is a sequential decision-making problem with coordinator as the only decision-maker. (Note that once the coordinator makes its decisions about which prescription to use, the agents act as mere evaluators and not independent decision-makers.) 

%\blue{Beliefs: Independence and belief update}

\textbf{Coordinator's belief state:} As shown in \citep{nayyar2013decentralized}, the coordinator's problem can be viewed as a POMDP. Therefore, the coordinator's belief state will serve as the sufficient statistic for selecting prescriptions. Before communication at time $t$, the coordinator's belief is given as:
\begin{align}
    &\Pi_t(x^1,x^2)=P(X^1_t=x^1, X^2_t=x^2|C_t,\Gamma_{1:(t-1)},\Lambda_{1:(t-1)}).
\end{align}
After the communication decisions are made and $Z_t$ is realized, the coordinator's belief is given as:
\begin{align}
    &\Pi_{t^+}(x^1,x^2)=P(X^1_t=x^1, X^2_t=x^2|C_{t^+},\Gamma_{1:t},\Lambda_{1:(t-1)}).
\end{align}
Because of conditional independence property identified in Lemma \ref{LEM:INDEPEN}, the coordinator's beliefs  can be factorized into beliefs on each agent's state, i.e.,
\begin{equation}
    \Pi_t(x^1,x^2)=\Pi^1_t(x^1) \Pi^2_t(x^2),
\end{equation}
\begin{equation}
    \Pi_{t^+}(x^1,x^2)=\Pi^1_{t^+}(x^1) \Pi^2_{t^+}(x^2),
\end{equation}
where $\Pi^i_t$ (resp. $\Pi^i_{t^+}$ ) is the coordinator's belief  on $X^i_t$ before (resp. after) the communication decisions are made. The coordinator can update its beliefs on the agents' states in a sequential manner as described in the following lemma.
\begin{lemma}\label{LEM:UPDATE}
$\Pi^i_1$ is the prior belief ($P_{X^i_1}$) on the initial state  $X^i_1$  and for each $t \geq 1$, 
\begin{equation}
    \Pi^i_{t^+}=\eta^i_t( \Pi^i_{t},\Gamma^i_t,Z_t),
\end{equation}
\begin{equation}
    \Pi^i_{t+1}=\beta^i_t( \Pi^i_{t^+},\Lambda^i_{t}),
\end{equation}
where   $\eta^i_t, \beta^i_t$ are fixed functions derived from the system model. (We will use $\beta_t( \Pi^{1,2}_{t^+},\Lambda^{1,2}_{t})$ to denote the pair $\beta^1_t( \Pi^1_{t^+},\Lambda^1_{t}), \beta^2_t( \Pi^2_{t^+},\Lambda^2_{t})$. Similar notation will be used for the pair $\eta^1_t(\cdot), \eta^2_t(\cdot)$.)
%\red{Shouldn't these by $\beta^i_t$ and $\eta^i_t$? Is the $\beta_t$ notation in DP consistent with this lemma?}
\end{lemma}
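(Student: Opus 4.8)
The plan is to derive both belief updates directly from Bayes' rule, using the conditional independence property of Lemma \ref{LEM:INDEPEN} to guarantee that the joint belief stays in product form and that each factor evolves using only agent $i$'s own quantities. The observation that organizes the whole argument is that the common information at the start of time $t+1$ coincides with the common information right after communication at time $t$, i.e.\ $C_{t+1}=C_{t^+}$; hence the two updates correspond respectively to (a) incorporating the newly revealed information $(Z_t,M^{1,2}_t)$ when passing from $C_t$ to $C_{t^+}$, and (b) propagating the belief forward through the state dynamics once the control prescription $\Lambda_t$ is applied.

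First I would establish the $\eta^i_t$ update (from $\Pi^i_t$ to $\Pi^i_{t^+}$). Passing from $C_t$ to $C_{t^+}$ adds $Z_t$ and $M^{1,2}_t$, and since $M^i_t=\Gamma^i_t(X^i_t)$, this amounts to conditioning on the events $\{\Gamma^1_t(X^1_t)=m^1_t\}$ and $\{\Gamma^2_t(X^2_t)=m^2_t\}$ together with the value of $Z_t$. I would split into two cases. If $M^{or}_t=1$, then $Z_t=X^{1,2}_t$ reveals both states exactly, so each $\Pi^i_{t^+}$ collapses to a point mass at the observed $x^i_t$, a function of $Z_t$ alone. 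If $M^{or}_t=0$, then $Z_t=\phi$ and the coordinator only learns $\Gamma^i_t(X^i_t)=0$ for each $i$; Bayes' rule then gives $\Pi^i_{t^+}(x^i)\propto \Pi^i_t(x^i)\,\mathds{1}[\Gamma^i_t(x^i)=0]$, i.e.\ the restriction of $\Pi^i_t$ to the zero-set of $\Gamma^i_t$, renormalized. In both cases the update depends only on $(\Pi^i_t,\Gamma^i_t,Z_t)$, which defines $\eta^i_t$. The factorization of Lemma \ref{LEM:INDEPEN} is what lets me condition the two agents' events separately and conclude that the product form $\Pi_{t^+}=\Pi^1_{t^+}\Pi^2_{t^+}$ is preserved.

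Next I would establish the $\beta^i_t$ update (from $\Pi^i_{t^+}$ to $\Pi^i_{t+1}$). Because $C_{t+1}=C_{t^+}$, no new common observation is added here; the only changes are that the state advances from $X_t$ to $X_{t+1}$ and that the control prescription $\Lambda_t$ enters the conditioning. Using the dynamics $X^i_{t+1}=k^i_t(X^i_t,U^i_t,W^i_t)$ with $U^i_t=\Lambda^i_t(X^i_t)$, and the independence of the noises $W^i_t$ from the past and across agents, the one-step transition kernel factorizes across agents, giving $\Pi^i_{t+1}(x^i)=\sum_{y}\Pi^i_{t^+}(y)\,P\bigl(k^i_t(y,\Lambda^i_t(y),W^i_t)=x^i\bigr)$, a function of $(\Pi^i_{t^+},\Lambda^i_t)$ only. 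This defines $\beta^i_t$.

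The main obstacle is not any single calculation but making the factorization argument airtight. I must confirm that conditioning on the joint event $\{M^{or}_t=0\}$, which couples $M^1_t$ and $M^2_t$, does not introduce a dependence between the two agents' beliefs, and that applying the joint prescriptions and advancing the joint dynamics preserves the product structure. Both follow from Lemma \ref{LEM:INDEPEN} together with the fact that each agent's communication map, control map, and transition kernel depend only on that agent's own state and noise; I would invoke these facts explicitly at each conditioning step rather than re-deriving the independence from scratch.
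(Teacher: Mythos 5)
Your proposal is correct and follows essentially the same route as the paper's proof: a Bayes'-rule case split on $Z_t$ (point-mass collapse when communication occurs, restriction of $\Pi^i_t$ to the zero-set of $\Gamma^i_t$ with renormalization when $Z_t=\phi$), with Lemma \ref{LEM:INDEPEN} justifying the per-agent factorization, followed by forward propagation through the controlled dynamics with $U^i_t=\Lambda^i_t(X^i_t)$ to define $\beta^i_t$. The only cosmetic difference is that the paper also tracks the prescription histories $\Gamma_{1:t},\Lambda_{1:t-1}$ explicitly in the conditioning and notes they can be dropped since they are functions of the common information under the coordinator's strategy, a technicality your argument absorbs implicitly.
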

\begin{proof}
See Appendix \ref{proof:Coord}.
\end{proof}

Finally, we note that if the coordinator's beliefs at time $t$ (before communication) are $\Pi^1_t, \Pi^2_t$ and it selects the prescriptions $\Gamma^1_t, \Gamma^2_t,$ the probability that $Z_t=\phi$ (i.e. probability of no communication at time $t$) is given as 
\begin{align}
&P(Z_t=\phi|\Pi_t^{1,2},\Gamma_t^{1,2}) = \notag\\
   &~~ =\sum_{x^1, x^2}\mathds{1}_{\{\Gamma^1_t(x^{1})=0\}}\mathds{1}_{\{\Gamma^2_t(x^{2})=0\}}\Pi^1_t(x^1)\Pi^2_t(x^2). \label{eq:probz1}
\end{align}
Similarly, the probability that $Z_t = (x^1,x^2)$ is given as 
\begin{align}
    &P(Z_t=(x^1,x^2)|\Pi_t^{1,2},\Gamma_t^{1,2})\notag\\
    &~~=\Big[\max(\Gamma^1_t(x^{1}),\Gamma^2_t(x^{2}))\Big]\Pi^1_t(x^1)\Pi^2_t(x^2). \label{eq:probz2}
 \end{align}
 (Note that $\max(\Gamma^1_t(x^{1}),\Gamma^2_t(x^{2}))$ is equal to the indicator function of the union of the events 
 $\{\Gamma^1_t(x^{1}) \neq 0\}$ and $\{\Gamma^2_t(x^{2}) \neq 0\}$.)

%\blue{POMDP dp with explicit summations and updates}

\textbf{Coordinator's dynamic program:} Using Lemma 3 and the probabilities given in \eqref{eq:probz1} - \eqref{eq:probz2}, we can write a dynamic program for the coordinator's POMDP problem. In the following theorem, $\pi^i$ denotes a general probability distribution on $\mathcal{X}^i$ and $\delta_{x^i}$ denotes a delta distribution centered at $x^i$.

\begin{theorem}\label{thm:dp}
The value functions  for the coordinator's dynamic program as follows: $V_{T+1}(\pi^1,\pi^2):=0$ for all $\pi^1,\pi^2,$ and for  $t=T, \ldots, 2, 1,$ \\
 \begin{align}
  &V_{t^+}(\pi^1,\pi^2) := \notag \\
 & \min_{\lambda^1, \lambda^2} \Big[\Big(\sum_{x^{1,2}}c_t\big(x^{1,2},\lambda^1(x^{1}),\lambda^2(x^{2})\big)\pi^1(x^1)\pi^2(x^2)\Big) \notag\\
 &+V_{t+1}(\beta_t( \pi^{1,2},\lambda^{1,2}))\Big],
\end{align}
where $\beta_t$ is as described in Lemma \ref{LEM:UPDATE}.
  \begin{align}
  &V_{t}(\pi^1,\pi^2) := \notag \\
 &\min_{\gamma^1, \gamma^2}\Big[\rho\sum_{x^{1,2}}\max(\gamma^1(x^{1}),\gamma^2(x^{2}))\pi^1(x^1)\pi^2(x^2) \notag \\ 
&+P(Z_t=\phi|\pi^{1,2},\gamma^{1,2}) V_{t^+}(\eta_t( \pi^{1,2},\gamma^{1,2}, \phi))\notag\\ &+\sum_{\tilde{x}^{1,2}}P(Z_t=\tilde{x}^{1,2}|\pi^{1,2},\gamma^{1,2})V_{t^+}(\delta_{\tilde{x}^1},\delta_{\tilde{x}^2})\Big], \label{dyn:update}
\end{align}
where $\eta_t$ is as described in Lemma \ref{LEM:UPDATE} and $P(Z_t=\phi|\pi^{1,2},\gamma^{1,2})$, $P(Z_t=\tilde{x}^{1,2}|\pi^{1,2},\gamma^{1,2})$ are as described by \eqref{eq:probz1} - \eqref{eq:probz2}. The coordinator's optimal strategy is to pick the minimizing prescription pairs for each time and each $(\pi^1,\pi^2)$.
\end{theorem}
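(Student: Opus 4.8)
The plan is to recognize the coordinator's problem as a standard finite-horizon POMDP in which the pair of factorized beliefs $(\Pi^1_t,\Pi^2_t)$ plays the role of an information state, and then to obtain the recursion by ordinary backward induction. The first step is to argue that $(\Pi^1_t,\Pi^2_t)$ is a controlled Markov state for the coordinator. By Lemma~\ref{lem:equi} the coordinator's optimization is equivalent to the agents' problem, so it suffices to solve the former. By Lemma~\ref{LEM:INDEPEN} the joint belief factorizes as $\Pi_t=\Pi^1_t\Pi^2_t$ and $\Pi_{t^+}=\Pi^1_{t^+}\Pi^2_{t^+}$, so the coordinator's state is the pair $(\pi^1,\pi^2)$. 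By Lemma~\ref{LEM:UPDATE}, given the prescriptions, the post-communication belief evolves according to $\eta_t$ (driven by the realized $Z_t$) and the next pre-communication belief evolves deterministically according to $\beta_t$. Hence $(\pi^1,\pi^2)$ evolves as a controlled Markov process whose ``controls'' are the prescription pairs $\gamma^{1,2}$ (before communication) and $\lambda^{1,2}$ (after communication).

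The second step is to express the per-stage cost as a function of the belief and the chosen prescription. Using the factorization from Lemma~\ref{LEM:INDEPEN}, the conditional expectation of the control cost $c_t(X_t,U_t)$ given $C_{t^+}$ and the prescription $\lambda^{1,2}$ equals $\sum_{x^{1,2}} c_t(x^{1,2},\lambda^1(x^1),\lambda^2(x^2))\,\pi^1(x^1)\pi^2(x^2)$, since $U^i_t=\lambda^i(X^i_t)$ and the states are independent under the coordinator's belief. Likewise, the expected communication cost is $\rho$ times the probability that $M^{or}_t=1$, which by \eqref{eq:probz1}--\eqref{eq:probz2} equals $\rho\sum_{x^{1,2}}\max(\gamma^1(x^1),\gamma^2(x^2))\pi^1(x^1)\pi^2(x^2)$. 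Thus both components of the instantaneous cost in \eqref{eq:cost} are deterministic functions of the current belief pair and the prescription.

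The third step is the backward induction itself. I would set $V_{T+1}:=0$ and proceed from $t=T$ down to $1$, alternating a control stage and a communication stage within each time step. For the control stage, given a post-communication belief $(\pi^1,\pi^2)$, the belief transition under $\lambda^{1,2}$ is deterministic (no new coordinator observation occurs between the control action and the next communication decision), so the cost-to-go is simply $V_{t+1}(\beta_t(\pi^{1,2},\lambda^{1,2}))$; minimizing the sum of the expected control cost and this term over $\lambda^{1,2}$ yields $V_{t^+}$. For the communication stage, given a pre-communication belief $(\pi^1,\pi^2)$ and a prescription $\gamma^{1,2}$, the observation $Z_t$ is random, so one takes the expectation of $V_{t^+}$ over $Z_t$ using \eqref{eq:probz1}--\eqref{eq:probz2}: with probability $P(Z_t=\phi\mid\pi^{1,2},\gamma^{1,2})$ the belief updates to $\eta_t(\pi^{1,2},\gamma^{1,2},\phi)$, and with probability $P(Z_t=\tilde x^{1,2}\mid\pi^{1,2},\gamma^{1,2})$ both states are revealed, so the belief collapses to the delta pair $(\delta_{\tilde x^1},\delta_{\tilde x^2})$. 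Adding the expected communication cost and minimizing over $\gamma^{1,2}$ produces $V_t$, which is exactly \eqref{dyn:update}. Standard finite-horizon MDP theory then guarantees that the minimizing prescriptions are optimal and that $V_1$ evaluated at the prior beliefs equals the optimal value of \eqref{eq:cost}.

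The main obstacle I expect is the careful justification that $(\Pi^1_t,\Pi^2_t)$ is a genuine information state, i.e. that it is both updated recursively (Lemma~\ref{LEM:UPDATE}) and sufficient for predicting future costs and observations. The delicate points are that the control-stage belief transition is deterministic whereas the communication-stage transition is stochastic in $Z_t$, and that when communication occurs the posterior must collapse to delta measures on the revealed states. Once these transition and cost maps are confirmed to depend only on $(\pi^1,\pi^2)$ and the prescription, the dynamic program is a routine application of backward induction for finite-horizon POMDPs.
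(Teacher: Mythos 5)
Your proposal is correct and follows essentially the same route as the paper: the paper's own proof simply asserts that the coordinator's problem is a POMDP (via Lemma~\ref{lem:equi} and the common-information approach of \citep{nayyar2013decentralized}) and that the value functions follow by ``simple manipulations'' of the corresponding POMDP dynamic program. Your write-up just makes those manipulations explicit -- verifying via Lemmas~\ref{LEM:INDEPEN} and~\ref{LEM:UPDATE} that the factorized belief pair is an information state with the stated transition and cost maps, then performing the two-stage backward induction -- which is exactly the argument the paper leaves implicit.
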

\begin{proof}
Since the coordinator's problem is a POMDP, it has a corresponding dynamic program. The value functions in the theorem can be obtained by simple manipulations of the POMDP dynamic program for the coordinator. 
%See Appendix \red{???}
\end{proof}

\begin{remark}\label{infiniteremark}
We can consider an infinite-horizon discounted cost analog of the problem formulation in this paper wherein the transition and cost functions are time-invariant. All the results can be extended to this discounted setting in a straightforward manner using the approach in \citep{nayyar2013decentralized}.
\end{remark}

%\red{Please write value functions for t+, t in a form similar to the above functions. Please pay close attention to the latex code}

%Value function $V_{T}(\pi_{T})$ before transmission at time $T$:
%\begin{equation}
%\begin{aligned}
%\min_{\gamma_T}&[\rho*\sum_{x_T^{1,2}}max(\gamma^1(x_T^{1}),\gamma^2(x_T^{2}))\pi^1(x^1_T)\pi^2(x^2_T)\\ 
%&+P(Z_t=\phi|\pi_T^{1,2},\Gamma_T^{1,2}) V_{T^+}(\eta( \Pi^{1,2}_{T},\Gamma^{1,2}_T))\\ &+\sum_{\tilde{x}_T^{1,2}}P(Z_t=x_T^{1,2}|\pi_T^{1,2},\Gamma_T^{1,2})V_{T^+}(\tilde{x}^1_T,\tilde{x}^2_T)]
%\end{aligned}
%\end{equation}
%  
%and for $t=T-1,T-2,...,1,$, value function $V_{t^+}(\pi_{t^+})$ after transmission:
%\begin{align}
%  \min_{\lambda_t}&[\sum_{x_t^{1,2}}c_T(x_t^{1,2},\lambda^1(x_t^{1}),\lambda^2(x_t^{2}))\pi^1_{t^+}(x^1_t)\pi^2_{t^+}(x^2_t)\notag\\
%  &+V_{t+1}(\beta(\pi_{t^+},\lambda_t))]
%\end{align}
%Value function $V_{t}(\pi_{t})$ before transmission:
%\begin{align}
%\min_{\gamma_t}&[\rho*\sum_{x_t^{1,2}}max(\gamma^1(x_t^{1}),\gamma^2(x_t^{2}))\pi^1(x^1_t)\pi^2(x^2_t)\notag\\ 
%&+P(Z_t=\phi|\pi_t^{1,2},\Gamma_t^{1,2}) V_{t^+}(\eta( \Pi^{1,2}_{t},\Gamma^{1,2}_t))\notag\\  &+\sum_{\tilde{x}_t^{1,2}}P(Z_t=x_t^{1,2}|\pi_t^{1,2},\Gamma_t^{1,2})V_{t^+}(\tilde{x}^1_t,\tilde{x}^2_t)]
%\end{align}

\section{Generalized Communication Models}
\paragraph{Erasure Model}
In this model, agents lose packets or fail to communicate with probability $p_e$ even when one (or both) of the agents decides to communicate, i.e. when $M^{or}_t=1$. Based on the communication model described above we can define a variable $Z^{er}_t$ as:
\begin{equation} \label{erasure:model}
  Z^{er}_t=\begin{cases}
    X^{1,2}_t, ~ w.p. ~1-p_e & \text{if $M^{or}_t=1$}.\\
    \phi, ~~~~~ w.p. ~~~~p_e & \text{if $M^{or}_t=1$}.\\
    \phi, & \text{if $M^{or}_t=0$}.
  \end{cases}
\end{equation}
The information structure at each agent is given by 
\begin{equation}
    I^i_{t}=\{X^i_{1:t},U^i_{1:t-1},Z^{er}_{1:t-1},M^{1,2}_{1:t-1}\}
\end{equation}
Lemma \ref{LEM:INDEPEN} and Proposition \ref{PROP:ONE} hold true for the erasure model and are proved in Appendix \ref{proof:Erasure model}. The coordinator can update its beliefs on the agents’ states in a sequential manner as described in the following lemma.
\begin{lemma}\label{LEMM:UPDATE}
$\Pi^i_1$ is the prior belief ($P_{X^i_1}$) on the initial state  $X^i_1$  and for each $t \geq 1$, 
\begin{equation}
    \Pi^i_{t^+}=\eta^{er_i}_t( \Pi^i_{t},\Gamma^i_t,Z^{er}_t),
\end{equation}
\begin{equation}
    \Pi^i_{t+1}=\beta^{er_i}_t( \Pi^i_{t^+},\Lambda^i_{t}),
\end{equation}
where   $\eta^{er_i}_t, \beta^{er_i}_t$ are fixed functions derived from the system model. (We will use $\beta^{er}_t( \Pi^{1,2}_{t^+},\Lambda^{1,2}_{t})$ to denote the pair $\beta^{er_1}_t( \Pi^1_{t^+},\Lambda^1_{t}), \beta^{er_2}_t( \Pi^2_{t^+},\Lambda^2_{t})$. Similar notation will be used for the pair $\eta^{er_1}_t(\cdot), \eta^{er_2}_t(\cdot)$.)
%\red{Shouldn't these by $\beta^i_t$ and $\eta^i_t$? Is the $\beta_t$ notation in DP consistent with this lemma?}
\end{lemma}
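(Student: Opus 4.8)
The plan is to follow the route used for Lemma~\ref{LEM:UPDATE} in the non-erasure case (Appendix~\ref{proof:Coord}), exploiting the fact that the erasure affects only the communication observation and leaves the local dynamics and control-action generation intact. Since Lemma~\ref{LEM:INDEPEN} continues to hold for the erasure model (Appendix~\ref{proof:Erasure model}), the coordinator's joint belief factorizes as $\Pi_t(x^1,x^2)=\Pi^1_t(x^1)\Pi^2_t(x^2)$, and similarly for $\Pi_{t^+}$; this product structure is what permits an agent-by-agent recursion. I would establish the two updates separately: first the control/transition update $\beta^{er_i}_t$, then the communication update $\eta^{er_i}_t$.

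The update $\Pi^i_{t+1}=\beta^{er_i}_t(\Pi^i_{t^+},\Lambda^i_t)$ requires no new argument. The control action is still $U^i_t=\Lambda^i_t(X^i_t)$ and the state still evolves by $X^i_{t+1}=k^i_t(X^i_t,U^i_t,W^i_t)$ with $W^i_t$ independent of the past, and none of these is altered by whether a packet is erased. Hence the derivation of $\beta^i_t$ in Lemma~\ref{LEM:UPDATE} transfers verbatim, so that $\beta^{er_i}_t=\beta^i_t$.

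The communication update $\Pi^i_{t^+}=\eta^{er_i}_t(\Pi^i_t,\Gamma^i_t,Z^{er}_t)$ is where the erasure enters. I would compute it by Bayes' rule conditioned on the newly revealed common information, namely the pair $(M^{1,2}_t,Z^{er}_t)$. When $Z^{er}_t=(x^1,x^2)$ the states are revealed and the belief collapses to $\Pi^i_{t^+}=\delta_{x^i}$. When $Z^{er}_t=\phi$, I would write the joint likelihood of $(X^i_t=x^i,X^{-i}_t=x^{-i},M^{1,2}_t,Z^{er}_t=\phi)$ using Lemma~\ref{LEM:INDEPEN} as the product $\Pi^i_t(x^i)\,\mathds{1}[\Gamma^i_t(x^i)=M^i_t]\,\Pi^{-i}_t(x^{-i})\,\mathds{1}[\Gamma^{-i}_t(x^{-i})=M^{-i}_t]$ times the factor $P(Z^{er}_t=\phi\mid M^{1,2}_t)$, which equals $1$ if $M^{or}_t=0$ and $p_e$ if $M^{or}_t=1$. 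Since this last factor does not depend on the states, it cancels under normalization; marginalizing over $x^{-i}$ then gives $\Pi^i_{t^+}(x^i)\propto\Pi^i_t(x^i)\,\mathds{1}[\Gamma^i_t(x^i)=M^i_t]$, which depends only on agent $i$'s quantities and simultaneously re-confirms the product form of $\Pi_{t^+}$.

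The step I expect to be the main obstacle is precisely the $Z^{er}_t=\phi$ case, and the reason it is more delicate than in Lemma~\ref{LEM:UPDATE} is that here the observation $\phi$ is consistent with two distinct events: either no agent attempted to communicate ($M^{or}_t=0$), or at least one did but the packet was erased ($M^{or}_t=1$). In the non-erasure model $Z_t=\phi$ already pins down $M^i_t=0$, whereas now it does not, so the update must additionally condition on the observed decision $M^i_t$ (available as part of the common information $M^{1,2}_t$). The crux is the verification that the erasure event is independent of the local states given $M^{or}_t=1$, so that $p_e$ factors out and does not couple the two agents' posteriors; confirming this state-independence, and hence that the per-agent update remains a well-defined function of $\Pi^i_t$, $\Gamma^i_t$, $M^i_t$, and $Z^{er}_t$, is the heart of the argument.
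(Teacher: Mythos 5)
Your proposal is correct and follows essentially the same route as the paper's proof: Bayes' rule conditioned on the newly revealed common information $(M^{1,2}_t, Z^{er}_t)$, the conditional-independence property (Lemma \ref{LEM:INDEPEN} for the erasure model) to factorize the joint likelihood, cancellation of the state-independent erasure factor $p_e$ under normalization, collapse to $\delta_{x^i}$ when the states are revealed, and verbatim transfer of the transition update $\beta^{er_i}_t = \beta^i_t$. The only cosmetic difference is that you merge the paper's two $Z^{er}_t=\phi$ sub-cases (Cases II and III, i.e.\ $M_t \neq (0,0)$ versus $M_t=(0,0)$) into a single formula $\Pi^i_{t^+}(x^i)\propto\Pi^i_t(x^i)\,\mathds{1}[\Gamma^i_t(x^i)=M^i_t]$, which is exactly the paper's case-split written compactly.
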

\begin{proof}
See Appendix \ref{proof:Coordd}.
\end{proof}

Equation \eqref{eq:probz1} and \eqref{eq:probz2} for the erasure model can be written as:
\begin{align}
&P(Z^{er}_t=\phi|\Pi_t^{1,2},\Gamma_t^{1,2}) = \notag\\
   &~~ =\sum_{x^1, x^2}\mathds{1}_{\{\Gamma^1(x^{1})=0\}}\mathds{1}_{\{\Gamma^2(x^{2})=0\}}\Pi^1_t(x^1)\Pi^2_t(x^2)\notag\\
   &+\sum_{x^1, x^2}p_e\Big[\max(\Gamma^1(x^{1}),\Gamma^2(x^{2}))\Big]\Pi^1_t(x^1)\Pi^2_t(x^2). \label{eq:probz11}
\end{align}
Similarly, the probability that $Z^{er}_t = (x^1,x^2)$ is given as 
\begin{align}
    &P(Z^{er}_t=(x^1,x^2)|\Pi_t^{1,2},\Gamma_t^{1,2})\notag\\
    &~~=(1-p_e)\Big[\max(\Gamma^1(x^{1}),\Gamma^2(x^{2}))\Big]\Pi^1_t(x^1)\Pi^2_t(x^2). \label{eq:probz22}
 \end{align}
 
%With these modified probabilities and Lemma \ref{LEMM:UPDATE}, one can then use the dynamic program in Theorem \ref{thm:dp} to obtain the optimal value functions and strategies for the model with an erasure channel.
With these modified probabilities and Lemma \ref{LEMM:UPDATE}, we can write a dynamic program for the coordinator's POMDP problem. In the following theorem, $\pi^i$ denotes a general probability distribution on $\mathcal{X}^i$ and $\delta_{x^i}$ denotes a delta distribution centered at $x^i$.

\begin{theorem}\label{thmm:dp}
The value functions  for the coordinator's dynamic program as follows: $V_{T+1}(\pi^1,\pi^2):=0$ for all $\pi^1,\pi^2,$ and for  $t=T, \ldots, 2, 1,$ \\
 \begin{align}
  &V_{t^+}(\pi^1,\pi^2) := \notag \\
 & \min_{\lambda^1, \lambda^2} \Big[\Big(\sum_{x^{1,2}}c_t\big(x^{1,2},\lambda^1(x^{1}),\lambda^2(x^{2})\big)\pi^1(x^1)\pi^2(x^2)\Big) \notag\\
 &+V_{t+1}(\beta^{er}_t( \pi^{1,2},\lambda^{1,2}))\Big],
\end{align}
where $\beta^{er}_t$ is as described in Lemma \ref{LEMM:UPDATE}.
  \begin{align}
  &V_{t}(\pi^1,\pi^2) := \notag \\
 &\min_{\gamma^1, \gamma^2}\Big[\rho\sum_{x^{1,2}}\max(\gamma^1(x^{1}),\gamma^2(x^{2}))\pi^1(x^1)\pi^2(x^2) \notag \\ 
&+P(Z^{er}_t=\phi|\pi^{1,2},\gamma^{1,2}) V_{t^+}(\eta^{er}_t( \pi^{1,2},\gamma^{1,2}, \phi))\notag\\ &+\sum_{\tilde{x}^{1,2}}P(Z^{er}_t=\tilde{x}^{1,2}|\pi^{1,2},\gamma^{1,2})V_{t^+}(\delta_{\tilde{x}^1},\delta_{\tilde{x}^2})\Big], \label{dyn:update}
\end{align}
where $\eta^{er}_t$ is as described in Lemma \ref{LEMM:UPDATE} and $P(Z^{er}_t=\phi|\pi^{1,2},\gamma^{1,2})$, $P(Z^{er}_t=\tilde{x}^{1,2}|\pi^{1,2},\gamma^{1,2})$ are as described by \eqref{eq:probz11} - \eqref{eq:probz22}. The coordinator's optimal strategy is to pick the minimizing prescription pairs for each time and each $(\pi^1,\pi^2)$.
\end{theorem}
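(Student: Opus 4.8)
The plan is to follow the same template as the proof of Theorem~\ref{thm:dp}, specializing the standard POMDP backward-induction recursion to the erasure model. First I would argue that, exactly as in the non-erasure case, the coordinator faces a POMDP whose information state at the start of time~$t$ is the pair of marginal beliefs $(\Pi^1_t,\Pi^2_t)$. The crucial structural fact is that the coordinator's joint belief factorizes as a product $\Pi^1_t(x^1)\Pi^2_t(x^2)$; this is a consequence of the conditional-independence property of Lemma~\ref{LEM:INDEPEN}, which (as noted in the text) continues to hold for the erasure model. Given this product structure and the sequential belief updates of Lemma~\ref{LEMM:UPDATE}, the value functions are obtained by writing down the generic POMDP dynamic program and simplifying each term.

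For the post-communication (control) stage I would expand $V_{t^+}(\pi^1,\pi^2)$. After $Z^{er}_t$ is realized the coordinator chooses control prescriptions $(\lambda^1,\lambda^2)$, each agent applies $U^i_t=\lambda^i(X^i_t)$, and the expected instantaneous cost is $\ee[c_t(X^{1,2}_t,\lambda^1(X^1_t),\lambda^2(X^2_t))]$. Because the belief factorizes, this expectation equals $\sum_{x^{1,2}} c_t(x^{1,2},\lambda^1(x^1),\lambda^2(x^2))\,\pi^1(x^1)\pi^2(x^2)$. The control prescriptions do not involve any erasure, so the belief transitions \emph{deterministically} to $\beta^{er}_t(\pi^{1,2},\lambda^{1,2})$ by Lemma~\ref{LEMM:UPDATE}; adding the cost-to-go $V_{t+1}$ evaluated at this updated belief and minimizing over $(\lambda^1,\lambda^2)$ yields the stated recursion for $V_{t^+}$.

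For the pre-communication stage I would expand $V_t(\pi^1,\pi^2)$. The coordinator selects communication prescriptions $(\gamma^1,\gamma^2)$, and a communication cost $\rho$ is charged whenever some agent decides to transmit, i.e.\ whenever $M^{or}_t=1$; since this cost depends on the transmission \emph{decision} and not on whether an erasure occurs, the communication-cost term $\rho\sum_{x^{1,2}}\max(\gamma^1(x^1),\gamma^2(x^2))\pi^1(x^1)\pi^2(x^2)$ is identical to that of Theorem~\ref{thm:dp}. The observation $Z^{er}_t$ is now random: with probability $P(Z^{er}_t=\phi\mid\pi^{1,2},\gamma^{1,2})$ from \eqref{eq:probz11} it equals $\phi$, and the belief updates to $\eta^{er}_t(\pi^{1,2},\gamma^{1,2},\phi)$; with probability $P(Z^{er}_t=\tilde{x}^{1,2}\mid\pi^{1,2},\gamma^{1,2})$ from \eqref{eq:probz22} it reveals $\tilde{x}^{1,2}$, collapsing the belief to the delta masses $(\delta_{\tilde{x}^1},\delta_{\tilde{x}^2})$. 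Taking the expectation of $V_{t^+}$ over these outcomes and adding the communication cost gives the stated recursion for $V_t$, and optimality of picking the minimizing prescriptions at every $(\pi^1,\pi^2)$ is the standard verification argument for finite-horizon dynamic programs.

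The part that genuinely differs from the non-erasure case, and which I expect to be the main obstacle, is the posterior update after observing $Z^{er}_t=\phi$. In the base model $Z_t=\phi$ implies with certainty that both prescriptions returned $0$, so the conditioning is immediate. Under erasures, $\phi$ is also consistent with an attempted-but-erased transmission, so the posterior is a genuine mixture weighted by $p_e$, and $\eta^{er}_t$ is correspondingly more involved. This subtlety, however, is entirely absorbed into Lemma~\ref{LEMM:UPDATE} (proved in Appendix~\ref{proof:Coordd}) and into the observation probabilities \eqref{eq:probz11}--\eqref{eq:probz22}; once those are in hand, the remainder of the argument is a routine specialization of POMDP dynamic programming, and the key simplification---factorization of both the belief and the instantaneous cost---carries over verbatim because the conditional-independence property of Lemma~\ref{LEM:INDEPEN} still holds.
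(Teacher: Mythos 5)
Your proposal is correct and follows essentially the same route as the paper: the paper's own proof is a one-line appeal to the fact that the coordinator's problem is a POMDP, with all erasure-specific content delegated to Lemma \ref{LEMM:UPDATE} and the modified observation probabilities \eqref{eq:probz11}--\eqref{eq:probz22}, exactly as you do. One minor caveat: since $M^{1,2}_t$ is part of the common information, the coordinator observes the communication decisions even when $Z^{er}_t=\phi$, so (per the proof of Lemma \ref{LEMM:UPDATE}) the post-communication belief conditions on the realized $m_t$ and the factor $p_e$ cancels out of the posterior rather than producing a $p_e$-weighted mixture; this does not affect your argument, which correctly defers the update to Lemma \ref{LEMM:UPDATE}.
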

\begin{proof}
Since the coordinator's problem is a POMDP, it has a corresponding dynamic program and the value functions in the theorem can be obtained by simple manipulations of the POMDP dynamic program for the coordinator. 
\end{proof}
\paragraph{State-dependent communication cost}
Whenever agents decide to share their states with each other, they incur a state-dependent  cost $\rho(X_t)$ instead of a fixed cost $\rho$. 
%The cost objective changes to $\rho(X_t)\times$
The cost objective in this case is given by:
\begin{equation}\label{eq:cost2}
   \ee\Big[\sum_{t=1}^{T}c_t({X}_t,{U}_t)+\rho(X_t)\mathds{1}_{\{M^{or}_t=1\}}\Big].
\end{equation}

In order to find the optimal value function and strategies, one can use the dynamic program in Theorem \ref{thm:dp} with the following modification to \eqref{dyn:update}:
\begin{align}
  &V_{t}(\pi^1,\pi^2) := \notag \\
 &\min_{\gamma^1, \gamma^2}\Big[\sum_{x^{1,2}}\rho(x^{1,2})\max(\gamma^1(x^{1}),\gamma^2(x^{2}))\pi^1(x^1)\pi^2(x^2) \notag \\ 
&+P(Z_t=\phi|\pi^{1,2},\gamma^{1,2}) V_{t^+}(\eta_t( \pi^{1,2},\gamma^{1,2}, \phi))\notag\\ &+\sum_{\tilde{x}^{1,2}}P(Z_t=\tilde{x}^{1,2}|\pi^{1,2},\gamma^{1,2})V_{t^+}(\delta_{\tilde{x}^1},\delta_{\tilde{x}^2})\Big].
\end{align}

\section{ Agents with communication constraints}\label{sec:constraints}
%Include constraints on 
%\bdashlist
%\item min and max time between successive comm
%\item total number of comms
%\end{list}
%in the problem formulation. 

In this section, we consider an extension of the problem formulated in Section \ref{sec:problem_formulation} where we incorporate some constraints on the communication between agents. The underlying system model,  information structure and the total expected cost  are the same as in Section \ref{sec:problem_formulation}. But now the agents have constraints on when and how frequently they can communicate. Specifically, we consider  the following three constraints:
\begin{enumerate}
\item Minimum time between successive communications must be at least $s_{min}$ (where $s_{min} \geq 0$).
\item Maximum time between successive communications cannot exceed $s_{max}$ (where $s_{max} \geq s_{min}$).
\item The total number of communications over the time horizon $T$ cannot exceed $N$.
\end{enumerate}
The strategy optimization problem is to find communication and control strategies for the agents that minimize the expected cost in \eqref{eq:cost} while ensuring that the above three constraints are satisfied.  We assume that there is at least one choice of strategies for the agents for which the constraints are satisfied (i.e. the constrained problem is feasible). Note that our framework allows for some of the above three constraints to be absent (e.g. setting $s_{min}=0$ effectively removes the first constraint; setting $N=T$ effectively removes the third constraint). 

We can follow the methodology of Section \ref{Coordinator Results} for the constrained problem as well. The key difference is that in addition to the coordinator's beliefs on the agents' states, we will also need to keep track of (i) the time since the most recent communication (denoted by $S^a_t$), and (ii) the total number of  communications so far (denoted by $S^b_t$). The variables $S^a_t, S^b_t$ are used by the coordinator to make sure that the prescriptions it selects will not violate the constraints. For example, if $S^a_t < s_{min}$, the coordinator can only select the communication prescriptions  that map $\mathcal{X}^i$ to $0$ for each $i$ since this ensures that the first constraint will be satisfied. Similarly, if $S^a_t = s_{max}$, then the coordinator must select a pair of communication prescriptions that ensure that a communication happens at the current time. The following theorem describes the modified dynamic program for the coordinator in the constrained formulation.
\begin{theorem}
The value functions  for the coordinator's dynamic program are as follows: $V_{T+1}(\pi^1,\pi^2, s^a, s^b):=0$ for all $\pi^1,\pi^2, s^a,s^b,$ and for  $t=T, \ldots, 2, 1,$ \\
 \begin{align}
  &V_{t^+}(\pi^1,\pi^2, s^a, s^b) := \notag \\
 & \min_{\lambda^1, \lambda^2} \Big[\sum_{x^{1,2}}c_t\big(x^{1,2},\lambda^1(x^{1}),\lambda^2(x^{2})\big)\pi^1(x^1)\pi^2(x^2) \notag\\
 &+V_{t+1}(\beta_t( \pi^{1,2},\lambda^{1,2}),s^a,s^b)\Big],
\end{align}
where $\beta_t$ is as described in Lemma \ref{LEM:UPDATE}; and if $s_{min} \leq s^a < s_{max}$ and $s^b < N$
  \begin{align}
  &V_{t}(\pi^1,\pi^2,s^a,s^b) := \notag \\
 &\min_{\gamma^1, \gamma^2}\Big[\rho\sum_{x^{1,2}}\max(\gamma^1(x^{1}),\gamma^2(x^{2}))\pi^1(x^1)\pi^2(x^2) \notag \\ 
&+P(Z_t=\phi|\pi^{1,2},\gamma^{1,2}) V_{t^+}(\eta_t( \pi^{1,2},\gamma^{1,2}, \phi), s^a+1,s^b)\notag\\ &+\sum_{\tilde{x}^{1,2}}P(Z_t=\tilde{x}^{1,2}|\pi^{1,2},\gamma^{1,2})V_{t^+}(\delta_{\tilde{x}^1},\delta_{\tilde{x}^2},0,s^b+1)\Big], \label{eq:dp_gamma}
\end{align}
where $\eta_t$ is as described in Lemma \ref{LEM:UPDATE} and $P(Z_t=\phi|\pi^{1,2},\gamma^{1,2})$, $P(Z_t=\phi|\pi^{1,2},\gamma^{1,2})$ are as described by \eqref{eq:probz1} - \eqref{eq:probz2}. The coordinator's optimal strategy is to pick the minimizing prescription pairs for each time and each $(\pi^1,\pi^2)$.  If $s^b=N$ or if $s^a < s_{min}$, then the minimization over $\gamma^1,\gamma^2$ in \eqref{eq:dp_gamma} is replaced by simply setting $\gamma^1,\gamma^2$ to be the prescription that map all states to $0$. If $s^b < N$ and if $s^a = s_{max}$, then the minimization over $\gamma^1,\gamma^2$ in \eqref{eq:dp_gamma} is replaced by simply setting $\gamma^1,\gamma^2$ to be the prescription that map all states to $1$.   
\end{theorem}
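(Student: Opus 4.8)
The plan is to follow the common information methodology of Section \ref{Coordinator Results} verbatim, but over an enlarged coordinator state that appends the two counters $S^a_t$ and $S^b_t$ to the pair of beliefs. The first and most important observation is that both counters are deterministic functions of the communication history: a communication occurs at time $\tau$ precisely when $Z_\tau \neq \phi$ (equivalently $M^{or}_\tau = 1$), and both $Z_{1:t-1}$ and $M^{1,2}_{1:t-1}$ are components of the common information $C_t$ in \eqref{commoninfo}. Since the coordinator observes $C_t$ by construction, it can compute $S^a_t$ and $S^b_t$ at every step, so the tuple $(\Pi^1_t, \Pi^2_t, S^a_t, S^b_t)$ is a legitimate information state for the coordinator's decision problem.

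Next I would record the transition dynamics of the augmented state. The belief components evolve exactly as in Lemma \ref{LEM:UPDATE}, because the constraints do not alter the system model or the map from prescriptions and local states to $Z_t$. The counters evolve deterministically according to whether a communication occurs: if $Z_t = \phi$ then $S^a_{t+1} = s^a + 1$ and $S^b_{t+1} = s^b$, whereas if $Z_t = \tilde{x}^{1,2}$ then $S^a_{t+1} = 0$ and $S^b_{t+1} = s^b + 1$. These are exactly the counter arguments appearing inside $V_{t^+}$ in the recursion \eqref{eq:dp_gamma}.

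The third step is to encode the three constraints as restrictions on the admissible communication prescriptions for each value of $(s^a, s^b)$. The minimum-separation and budget constraints forbid communication whenever $s^a < s_{min}$ or $s^b = N$; this is enforced by collapsing the minimization to the prescription that maps every state to $0$, which yields $P(Z_t = \phi \mid \pi^{1,2}, \gamma^{1,2}) = 1$ by \eqref{eq:probz1}. The maximum-separation constraint forces a communication whenever $s^a = s_{max}$ and the budget is not yet spent; this is enforced by collapsing the minimization to the prescription mapping every state to $1$, which yields $M^{or}_t = 1$ with probability one. In the remaining regime $s_{min} \leq s^a < s_{max}$ and $s^b < N$ the coordinator optimizes freely over $\gamma^1, \gamma^2$ exactly as in Theorem \ref{thm:dp}. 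With the feasible prescription set pinned down in each case, the value functions follow by writing the standard backward POMDP recursion over the augmented state and specializing the stage-cost and transition terms via \eqref{eq:probz1}--\eqref{eq:probz2} and Lemma \ref{LEM:UPDATE}, precisely as in the proof of Theorem \ref{thm:dp}.

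The main obstacle, and essentially the only genuinely new point beyond bookkeeping, is verifying that the forced-action specification is consistent, i.e.\ that no reachable state demands communication and silence simultaneously. Forced silence arises when $s^a < s_{min}$ or $s^b = N$, while forced communication arises when $s^a = s_{max}$ and $s^b < N$; the assumption $s_{max} \geq s_{min}$ rules out $s^a < s_{min}$ and $s^a = s_{max}$ holding together, and $s^b = N$ cannot coexist with $s^b < N$, so the two clauses never contradict each other as written. The remaining subtlety is that budget exhaustion is given priority over the max-gap rule, so one must ensure the coordinator never reaches $s^a = s_{max}$ with $s^b = N$ under admissible play (which would force a violation of the maximum-separation constraint with no feasible remedy); this is exactly what the standing feasibility assumption precludes, guaranteeing that the minimization—or its forced-action replacement—is always over a nonempty admissible set and the dynamic program is well-posed.
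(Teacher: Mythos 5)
Your proposal takes the same route as the paper. In fact, the paper states this theorem with no proof at all: its entire justification is the informal paragraph preceding the theorem in Section \ref{sec:constraints}, which sketches exactly what you carry out --- augmenting the coordinator's information state with $(S^a_t,S^b_t)$, noting that these counters are computable from the common information, recording their deterministic transitions, and encoding the three constraints as restrictions on the admissible communication prescriptions. Up to that point your write-up is correct and, if anything, more detailed than the source.

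The place where you go beyond the paper --- the well-posedness of the forced-action rules --- is, however, also where your argument has a genuine gap. You claim that the standing feasibility assumption precludes the coordinator from ever reaching $s^a = s_{max}$ with $s^b = N$. It does not. Feasibility only asserts that \emph{some} strategy satisfies all three constraints; it does not prevent the dynamic program, as written, from steering into such dead-end states, because the recursion assigns finite, unpenalized value to every state with $s^b = N$ (it merely forces silence there). Consequently the free minimization at earlier stages may prefer prescriptions that exhaust the communication budget early even though more than $s_{max}$ steps remain, after which the maximum-gap constraint is necessarily violated while the value function records no penalty. For instance, with $s_{min}=0$, $s_{max}=2$, $N=2$, $T=5$, and stage costs that strongly reward communicating at $t=1$ and $t=2$, the minimizing prescriptions communicate in the first two periods and the induced strategy is infeasible, although the problem itself is feasible (communicate at $t=2$ and $t=4$). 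To close the gap one must either restrict the minimization in the free regime to prescriptions whose continuations can still meet all constraints, or define $V_t(\pi^1,\pi^2,s^a,s^b):=+\infty$ at every $(t,s^a,s^b)$ from which the constraints can no longer be jointly satisfied. To be fair, this defect is inherited from the theorem as stated --- the paper makes no provision for it either --- but your appeal to the feasibility assumption does not do the work you assign to it.
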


%Write the corresponding pomdp dp.
%
%Can include the MDP version of the DP here.

\section{Experiments}
\textbf{Problem setup} Consider a system where there are two entities that are susceptible to attacks. Each entity has an associated defender that can either choose to do nothing (denoted by $\aleph$) or defend the system (denoted by $d$). Thus, the defenders are the decision-making agents in this model. The state $X_t^i \in \{0,1\}$ of agent $i$ represents whether or not entity $i$ is under attack at time $t$. We use 1 to denote the \emph{attack} state and 0 to denote the \emph{safe} (non-attack) state.  If entity $i$ is currently in the safe state, i.e. $X^i_t = 0$, then with probability $p^i_a$, the entity transitions to the attack state 1 (irrespective of the defender's action). When entity $i$ is under attack, i.e. $X_t^i = 1$, if the defender chooses to defend, then the entity transitions to the safe state 0 with probability $p^i_v$. If the defender chooses to do nothing, then the state does not change with probability 1, i.e. $X_{t+1}^i = X_t^{i}$. If both entities are in the safe state, then the cost incurred by the system is 0. If at least one entity is under attack, then the cost incurred is 20. Further, an additional cost of 150 is incurred if both defenders choose to defend at the same time (in any state). We consider a range of communication costs $\rho$. The transition and cost structures can be found in a tabular form in Tables \ref{trans} and \ref{costable}.

When an entity is under attack, the associated defender needs to defend the system at some point of time (if not immediately). Otherwise, the system will remain in the attack state perpetually. However, a heavy cost is incurred if both agents defend at the same time. Therefore, the agents must defend their respective entities in a coordinated manner. Communicating with each other can help in coordinating effectively. On the other hand, communicating all the time can also lead to a high communication cost. This tradeoff between communication and coordination can be balanced optimally using our approach discussed in Section \ref{Coordinator Results}.

Our model shares some similarities with the multi-access broadcast problem with limited buffers. Note that unlike in \citep{hansen2004dynamic}, the transmitters in our model can choose to communicate with each other before using the broadcast channel.

\begin{table}[t]
\caption{The transition probabilities $\prob[X^i_{t+1} \mid X^i_t, U^i_t]$. The rows correspond to the current state $X_t^i$ and the columns correspond to the action $U_t^i$. Each entry in the table is a distribution over the set \{0,1\}.}
\label{trans}
%\vskip 0.15in
\begin{center}
\begin{small}
\begin{sc}
\begin{tabular}{lccccr}
\toprule
 & $\aleph$ & $d$  \\
\midrule
$0$   & (1-$p^i_a$,$p_a^i$)& (1-$p^i_a$,$p_a^i$)\\
$1$   & (0,1)& ($p^i_v$,1-$p_v^i$)\\
\bottomrule
\end{tabular}
\end{sc}
\end{small}
\end{center}
\vskip -0.1in
\end{table}

\begin{table}[t]
\caption{The cost $c_t(X_t,U_t) = \vartheta^{t-1}c(x,u)$ where $\vartheta$ is the discount factor and the function $c(x,u)$ is shown below. Each row corresponds to a pair of actions $u := (u^{1},u^{2})$ and each column corresponds to a state $x:=(x^1,x^2)$.}
\label{costable}
%\vskip 0.15in
\begin{center}
\begin{small}
\begin{sc}
\begin{tabular}{lccccr}
\toprule
 & $(0,0)$ & $(0,1)$ & $(1,0)$ & $(1,1)$ \\
\midrule
$(\aleph, \aleph)$   & 0 & 20& 20& 20\\
$(\aleph,d)$   & 0& 20& 20 & 20\\
$(d,\aleph)$   & 0& 20& 20 & 20\\
$(d,d)$   & 150 & 170& 170& 170\\
\bottomrule
\end{tabular}
\end{sc}
\end{small}
\end{center}
\vskip -0.1in
\end{table}

\textbf{Implementation} In our experiments, the transition probabilities are time invariant and the cost is discounted as shown in Table \ref{costable}. The time horizon $T$ is very large. When the horizon is sufficiently large, the optimal discounted costs in finite-horizon and infinite-horizon cases are approximately the same. We therefore use an infinite-horizon POMDP solver to find approximately optimal finite horizon cost for the coordinator's POMDP. Since the agents alternate between communication and control (see equations \eqref{commact1} and \eqref{controlact1}), the coordinator's POMDP as described in  Section \ref{Coordinator Results} is not time-invariant. To convert it into a time-invariant POMDP, we introduce an additional binary state variable $X_t^{c}$. This variable represents whether the agents currently are in the communication phase or the control phase. The variable $X_t^c$ alternates between 0 and 1 in a deterministic manner. For agent $i$ in the communication phase, action $\aleph$ is interpreted as the no communication decision ($M_t^i = 0$) and all other actions (in this case only $d$) are interpreted as the communication decision ($M_t^i = 1$).

With this transformation, we can use any infinite-horizon POMDP solver to obtain approximately optimal solutions for our problem. In our experiments, we use the SARSOP solver \citep{kurniawati2008sarsop} that is available in the Julia POMDPs framework \citep{egorov2017pomdps}.
\begin{remark}
As discussed in Remark \ref{infiniteremark}, the procedure described above can be used to find optimal values and strategies even for the case when the time-horizon is infinite.
\end{remark}
\begin{remark}A major challenge in solving the coordinator's POMDP is that the prescription space is exponential in state size $\mathcal{X}^i$. When state spaces are large, even single stage problems are very difficult to solve. In such situations, one can use alternative approaches as in \citep{foerster2019bayesian} wherein the prescriptions are parameterized in a concise manner. However, these methods are not guaranteed to result in optimal solutions.
\end{remark}

\textbf{Results} We consider three strategies in our experiments: (i) the jointly optimal communication and control strategy computed using the coordinator's POMDP, (ii) agents never communicate with each other and the control strategy is optimized subject to this constraint, and (iii) agents always communicate with each other and the control strategy is optimized subject to this constraint. The expected cost associated with these three strategies is shown in Tables \ref{res1} and \ref{res2} under two different choices of model parameters. The approximation error achieved using the SARSOP solver is at most 0.001.

\begin{table}[t]
\caption{Optimal costs with initial state $X_1 = (0,0)$ under different communication strategies. The parameters $p^i_a = 0.3$ and $p^i_v = 0.6$, $i = 1,2$. The discount factor $\vartheta$ is $0.95$.}
\label{res1}
%\vskip 0.15in
\begin{center}
\begin{small}
\begin{sc}
\begin{tabular}{lccccr}
\toprule
 & Optimal & Never comm. & Always comm. \\
\midrule
$\rho = 0$   & 105.86 & 120.42& 105.86\\
$\rho = 1$   & 108.45 & 120.42& 116.12\\
$\rho = 2$   & 111.05 & 120.42& 126.37\\
$\rho = 4$   & 116.24 & 120.42& 146.89\\
$\rho = 8$   & 120.42 & 120.42& 187.96\\
\bottomrule
\end{tabular}
\end{sc}
\end{small}
\end{center}
\vskip -0.1in
\end{table}

\begin{table}[t]
\caption{Optimal costs with initial state $X_1 = (0,0)$ under different communication strategies. The parameters $p^1_a = 0.5$, $p^1_v = 0.95$, $p_a^2 = 0.1$ and $p_v^2 = 0.6$. The discount factor $\vartheta$ in this case is $0.99$.}
\label{res2}
%\vskip 0.15in
\begin{center}
\begin{small}
\begin{sc}
\begin{tabular}{lccccr}
\toprule
 & Optimal & Never comm. & Always comm. \\
\midrule
$\rho = 0$   & 452.18 & 538.13& 452.87\\
$\rho = 1$   & 459.05 & 538.13& 502.43\\
$\rho = 2$   & 465.93 & 538.13& 552.68\\
$\rho = 4$   & 479.64 & 538.13& 653.19\\
$\rho = 8$   & 504.26 & 538.13& 854.19\\
$\rho = 16$   & 537.86 & 538.13& 1256.21\\
$\rho = 32$   & 538.13 & 538.13& 2060.23\\
\bottomrule
\end{tabular}
\end{sc}
\end{small}
\end{center}
\vskip -0.1in
\end{table}

\section{Conclusion}
We considered  a multi-agent problem  where  agents can dynamically decide at each time step whether to share information with each other and incur the resulting communication cost. Our goal was to jointly design  agents' communication and control strategies in order to optimize the trade-off between communication costs and control objective. We  showed that agents can ignore a big part of their private information without compromising the system performance. We then  provided a common information approach based solution for the strategy optimization problem. Our approach relies on constructing a fictitious POMDP whose solution (obtained via a dynamic program) characterizes the optimal strategies for the agents. We also extended our solution to incorporate constraints on when and how frequently agents can communicate.

%\clearpage
%\bibliographystyle{icml2021}
\bibliography{refs}

\clearpage

\onecolumn
\appendix
%\input{AppendixA}

%%%%%PUT THIS BACK IN
\section{Proof of Lemma \ref{LEM:INDEPEN}}
\label{proof:CI}
We prove the lemma by induction. At $t=1$, before communication decisions are made, \eqref{eq:indepen} is trivially true since there is no common information at this point and the agents' initial states are independent. For \eqref{eq:indepen2} at $t=1$, recall that $c_{1^+} = (z_1, m_1)$. The left hand side of \eqref{eq:indepen2} for $t=1$ can be written as 

%\red{Below, $f$ should be $f^i_t$ for appropriate $i$ and $t$. Please fix.  Your proof is not very formal   and not always clear. I am fixing the base case for this lemma. Read it very carefully and write the induction step along similar lines.}

\begin{align}
& \prob({x}_1,{u}_{1}|z_{1}, {m}_{1})=\frac{\prob({x}_1,{u}_{1},z_{1},{m}_{1})}{\prob(z_{1},{m}_{1})}  \notag\\
&=\frac{\prob({u}_{1}|{x}_1,z_{1},{m}_{1})\prob(z_{1}|{x}_1,{m}_{1})\prob(m_{1}|x_1)\prob(x_1)}{\prob(z_{1},m_{1})} \notag \\
 &\frac{\mathds{1}_{(u^1_1=g^1_1(x^1_1,z_1,m_1))}\mathds{1}_{(u^2_1=g^2_1(x^2_1,z_1,m_1))}\prob(z_{1}|x_1,m_{1})\mathds{1}_{(m^2_1=f^2_1(x^2_1))}\mathds{1}_{(m^1_1=f^1_1(x^1_1))}\prob(x^1_1)\prob(x^2_1)}{\prob(z_{1},m_{1})}. \label{rn1:base}
\end{align}

The first term (corresponding to $i=1$) on the right hand side of \eqref{eq:indepen2} for $t=1$ can be written as 
\begin{align}
     &\prob(x^1_1,u^1_{1}|z_{1},m_{1})=\frac{\prob(x^1_1,u^1_{1},z_{1},m_{1})}{\prob(z_{1},m_{1})}\notag\\
     &=\frac{\prob(u^1_{1}|x^1_1,z_{1},m_{1})\prob(z_{1}|x^1_1,m_{1})\prob(m_{1}|x^1_1)\prob(x^1_1)}{\prob(z_{1},m_{1})}\notag\\
    &=\frac{\mathds{1}_{(u^1_1=g^1_1(x^1_1,z_1,m_1))}\prob(z_{1}|x^1_1,m_{1})\mathds{1}_{(m^1_1=f^1_1(x^1_1))}\prob(m^2_{1}|x^1_1)\prob(x^1_1)}{\prob(z_{1},m_{1})}. \label{after1:comm}
\end{align}
Similarly, the second term (corresponding to $i=2$) on the right hand side of \eqref{eq:indepen2}  for $t=1$ can be written as
\begin{equation}\label{after2:comm}
    \frac{\mathds{1}_{(u^2_1=g^2_1(x^2_1,z_1,m_1))}\prob(z_{1}|x^2_1,m_{1})\mathds{1}_{(m^2_1=f^2_1(x^2_1))}\prob(m^1_{1}|x^2_1)\prob(x^2_1)}{\prob(z_{1},m_{1})}.
\end{equation}
Comparing \eqref{rn1:base}, \eqref{after1:comm} and \eqref{after2:comm}, it is clear that  we just need to prove that
\begin{align}
\frac{\prob(z_{1}|x_1,m_{1})}{\prob(z_{1},m_{1})}  = \frac{\prob(z_{1}|x^1_1,m_{1})\prob(m^2_{1}|x^1_1)}{\prob(z_{1},m_{1})} \times \frac{\prob(z_{1}|x^2_1,m_{1})\prob(m^1_{1}|x^2_1)}{\prob(z_{1},m_{1})} \label{eq:new_1}
\end{align}
in order to establish \eqref{eq:indepen2} for $t=1$. We consider two cases:

\textbf{Case I: } $Z_1=\phi$ and $M_1 = (0,0)$. In this case, the left hand side of \eqref{eq:new_1} can be written as 
\begin{align}
\frac{\prob(Z_{1}= \phi|X_1=x_1,M_{1}=(0,0))}{\prob(Z_{1}=\phi,M_{1} = (0,0))} = \frac{1}{\prob(M^1_{1} = 0)\prob(M^2_1=0)} \label{eq:new6}
\end{align}
Similarly, the right hand side of  \eqref{eq:new_1} can be written as 
\begin{align}
 &\frac{P(M^2_1=0|X^1_1=x^1_1)}{\prob(M^1_{1} = 0)\prob(M^2_1=0)} \times \frac{P(M^1_1=0|X^2_1=x^2_1)}{\prob(M^1_{1} = 0)\prob(M^2_1=0)} \notag \\
 &= \frac{P(M^2_1=0)}{\prob(M^1_{1} = 0)\prob(M^2_1=0)} \times \frac{P(M^1_1=0)}{\prob(M^1_{1} = 0)\prob(M^2_1=0)} \notag \\
 &= \frac{1}{\prob(M^1_{1} = 0)\prob(M^2_1=0)} \label{eq:new5}
\end{align}
where we used the fact that $M^i_1$ and $X^j_1$ are independent for $i \neq j$. Comparing \eqref{eq:new6} and \eqref{eq:new5} establishes \eqref{eq:new_1} for this case.

\textbf{Case II: } $Z_1=(\tilde{x}^1_1, \tilde{x}^2_1) $ and $M_1 = (m^1_1,m^2_1) \neq (0,0)$. In this case, the left hand side of \eqref{eq:new_1} can be written as 
\begin{align}
&\frac{\prob(Z_{1}= (\tilde{x}^1_1, \tilde{x}^2_1)|X_1=x_1,M_{1}=(m^1_1,m^2_1))}{\prob(Z_{1}= (\tilde{x}^1_1, \tilde{x}^2_1),M_{1}=(m^1_1,m^2_1))} = \frac{\mathds{1}_{\{x_1=(\tilde{x}^1_1, \tilde{x}^2_1)\}}}{\prob(X_{1}= (\tilde{x}^1_1, \tilde{x}^2_1),M_{1}=(m^1_1,m^2_1))} \notag \\
&= \frac{\mathds{1}_{\{x_1=(\tilde{x}^1_1, \tilde{x}^2_1)\}}}{\prob(X_{1}= (\tilde{x}^1_1, \tilde{x}^2_1))\mathds{1}_{(m^1_1=f^1_1(\tilde{x}^1_1))}\mathds{1}_{(m^2_1=f^2_1(\tilde{x}^2_1))}} \label{eq:new2}
\end{align}
Consider the first term on the right hand side of \eqref{eq:new_1}. It can be written as
\begin{align}
&\frac{\prob(Z_{1} = (\tilde{x}^1_1, \tilde{x}^2_1) |X^1_1 = x^1_1, M_1 =m_{1})\prob(M^2_1 = m^2_{1}|X^1_1=x^1_1)}{\prob(Z_{1}= (\tilde{x}^1_1, \tilde{x}^2_1),M_{1}=(m^1_1,m^2_1))} \notag \\
& =  \frac{\mathds{1}_{\{x^1_1=\tilde{x}^1_1\}}\prob(X^2_1 = \tilde{x}^2_1|M^2_1 = m^2_1)\prob(M^2_1 = m^2_{1}) }{\prob(X_{1}= (\tilde{x}^1_1, \tilde{x}^2_1))\mathds{1}_{(m^1_1=f^1_1(\tilde{x}^1_1))}\mathds{1}_{(m^2_1=f^2_1(\tilde{x}^2_1))}} \notag \\
&= \frac{\mathds{1}_{\{x^1_1=\tilde{x}^1_1\}}\prob(X^2_1 = \tilde{x}^2_1, M^2_1 = m^2_1) }{\prob(X_{1}= (\tilde{x}^1_1, \tilde{x}^2_1))\mathds{1}_{(m^1_1=f^1_1(\tilde{x}^1_1))}\mathds{1}_{(m^2_1=f^2_1(\tilde{x}^2_1))}} \notag \\
&= \frac{\mathds{1}_{\{x^1_1=\tilde{x}^1_1\}}\prob(X^2_1 = \tilde{x}^2_1)\mathds{1}_{(m^2_1=f^2_1(\tilde{x}^2_1))} }{\prob(X_{1}= (\tilde{x}^1_1, \tilde{x}^2_1))\mathds{1}_{(m^1_1=f^1_1(\tilde{x}^1_1))}\mathds{1}_{(m^2_1=f^2_1(\tilde{x}^2_1))}} \notag \\
&= \frac{\mathds{1}_{\{x^1_1=\tilde{x}^1_1\}} }{\prob(X^1_{1}= \tilde{x}^1_1)\mathds{1}_{(m^1_1=f^1_1(\tilde{x}^1_1))}} \label{eq:new3}
\end{align} 
where we used the fact that $(X^i_1,M^i_1)$ are independent of  $X^j_1$  for $i \neq j$. Similarly, the second term on the right hand side of  \eqref{eq:new_1} can be written as 
\begin{align}
&\frac{\prob(Z_{1} = (\tilde{x}^1_1, \tilde{x}^2_1) |X^2_1 = x^1_1, M_1 =m_{1})\prob(M^1_1 = m^1_{1}|X^2_1=x^2_1)}{\prob(Z_{1}= (\tilde{x}^1_1, \tilde{x}^2_1),M_{1}=(m^1_1,m^1_2))} \notag \\
&= \frac{\mathds{1}_{\{x^2_1=\tilde{x}^2_1\}} }{\prob(X^2_{1}= \tilde{x}^2_1)\mathds{1}_{(m^2_1=f^2_1(\tilde{x}^2_1))}} \label{eq:new4}
\end{align} 
Comparing \eqref{eq:new2}, \eqref{eq:new3} and \eqref{eq:new4} establishes \eqref{eq:new_1} for this case.

\textbf{Induction step:} Assuming that Lemma \ref{LEM:INDEPEN} holds for  $t$, we need to show that it holds for $t+1$. At time $t+1$, before 
communication decisions are made, left hand side of equation \eqref{eq:indepen} can be written as
\begin{align}
& \prob(x_{1:t+1},u_{1:t}|z_{1:t},m_{1:t})=\prob(x^{1,2}_{t+1}|x_{1:t},u_{1:t},z_{1:t},m_{1:t})\prob(x_{1:t},u_{1:t}|z_{1:t},m_{1:t})  \notag\\
&=\prob(x^{2}_{t+1}|x^{1}_{t+1},x_{1:t},u_{1:t},z_{1:t},m_{1:t})\prob(x^1_{t+1}|x_{1:t},u_{1:t},z_{1:t},m_{1:t})P(x_{1:t},u_{1:t}|z_{1:t},m_{1:t}) \notag \\
 &=\prob(x^{2}_{t+1}|x^2_{t},u^2_{t})\prob(x^{1}_{t+1}|x^1_{t},u^1_{t})\prob(x^1_{1:t},u^1_{1:t}|z_{1:t},m_{1:t})\prob(x^2_{1:t},u^2_{1:t}|z_{1:t},m_{1:t}). \label{ln1:inductive}
\end{align}

The first term (corresponding to $i=1$) on the right hand side of \eqref{eq:indepen} for $t+1$ can be written as 
\begin{align}
     &\prob(x^1_{1:t+1},u^1_{1:t}|z_{1:t},m_{1:t})=\prob(x^{1}_{t+1}|x^1_{1:t},u^1_{1:t},z_{1:t},m_{1:t})P(x^1_{1:t},u^1_{1:t}|z_{1:t},m_{1:t})\notag\\
     &=\prob(x^{1}_{t+1}|x^1_{t},u^1_{t})P(x^1_{1:t},u^1_{1:t}|z_{1:t},m_{1:t}). \label{rn1:inductive}
\end{align}
Similarly for $i=2$,
\begin{equation}\label{rn2:inductive}
    P(x^2_{1:t+1},u^2_{1:t}|z_{1:t},m_{1:t})=\prob(x^{2}_{t+1}|x^2_{t},u^2_{t})P(x^2_{1:t},u^2_{1:t}|z_{1:t},m_{1:t}).
\end{equation}
Comparing \eqref{ln1:inductive}, \eqref{rn1:inductive} and \eqref{rn2:inductive} establishes \eqref{eq:indepen}   for time $t+1$.

For \eqref{eq:indepen2} at $t+1$, recall that $c_{{t+1}^+} = (z_{1:t+1},m_{1:t+1})$. The left hand side of \eqref{eq:indepen2} for $t+1$ can be written as 
\begin{align}
&\prob(x_{1:t+1},u_{1:t+1}|z_{1:t+1},m_{1:t+1})=
\frac{\prob(x_{1:t+1},u_{1:t+1},z_{t+1},m_{t+1}| z_{1:t},m_{1:t})}{\prob(z_{t+1},m_{t+1} | z_{1:t},m_{1:t})} \notag\\
&=\prob(u_{t+1}|x_{1:t+1},u_{1:t},z_{1:t+1},m_{1:t+1})\times\notag\\
&\frac{\prob(z_{t+1}|x_{1:t+1},u_{1:t},z_{1:t},m_{1:t+1})\prob(m_{t+1}|x_{1:t+1},u_{1:t},z_{1:t},m_{1:t})\prob(x_{1:t+1},u_{1:t}|z_{1:t},m_{1:t})}{\prob(z_{t+1},m_{t+1} | z_{1:t},m_{1:t})} \notag \\
 &=\mathds{1}_{(u^2_{t+1}=g^2_{t+1}(x^2_{1:t+1},u^2_{1:t},z_{1:t+1},m_{1:t+1}))}\mathds{1}_{(u^1_{t+1}=g^1_{t+1}(x^1_{1:t+1},u^1_{1:t},z_{1:t+1},m_{1:t+1}))}\prob(z_{t+1}|x_{1:t+1},u_{1:t},z_{1:t},m_{1:t+1})\times \notag\\
&\frac{\mathds{1}_{(m^2_{t+1}=f^2_{t+1}(x^2_{1:t+1},u^2_{1:t},z_{1:t},m_{1:t}))}\mathds{1}_{(m^1_{t+1}=f^1_{t+1}(x^1_{1:t+1},u^1_{1:t},z_{1:t},m_{1:t}))}\prob(x^1_{1:t+1},u^1_{1:t}|z_{1:t},m_{1:t})\prob(x^2_{1:t+1},u^2_{1:t}|z_{1:t},m_{1:t})}{\prob(z_{t+1},m_{t+1} | z_{1:t},m_{1:t})}. \label{rhs2:inductive}
\end{align}

The first term (corresponding to $i=1$) on the right hand side of \eqref{eq:indepen2} for $t+1$ can be written as 
\begin{align}
     &\prob(x^1_{1:t+1},u^1_{1:t+1}|z_{1:t+1},m_{1:t+1})=\frac{\prob(x^1_{1:t+1},u^1_{1:t+1},z_{t+1},m_{t+1}| z_{1:t},m_{1:t})}{\prob(z_{t+1},m_{t+1} | z_{1:t},m_{1:t})}\notag\\
     &=\prob(u^1_{t+1}|x^1_{1:t+1},u^1_{1:t},z_{1:t+1},m_{1:t+1}) \times \notag\\
     &\frac{\prob(z_{t+1}|x^1_{1:t+1},u^1_{1:t},z_{1:t},m_{1:t+1})\prob(m_{t+1}|x^1_{1:t+1},u^1_{1:t},z_{1:t},m_{1:t})\prob(x^1_{1:t+1},u^1_{1:t}|z_{1:t},m_{1:t})}{\prob(z_{t+1},m_{t+1} | z_{1:t},m_{1:t})}\notag\\
    &=\mathds{1}_{(u^1_{t+1}=g^1_{t+1}(x^1_{1:t+1},u^1_{1:t},z_{1:t+1},m_{1:t+1}))}\prob(z_{t+1}|x^1_{1:t+1},u^1_{1:t},z_{1:t},m_{1:t+1})\times \notag\\
&\frac{\mathds{1}_{(m^1_{t+1}=f^1_{t+1}(x^1_{1:t+1},u^1_{1:t},z_{1:t},m_{1:t})}\prob(m^2_{t+1}|x^1_{1:t+1},u^1_{1:t},z_{1:t},m_{1:t})\prob(x^1_{1:t+1},u^1_{1:t}|z_{1:t},m_{1:t})}{\prob(z_{t+1},m_{t+1} | z_{1:t},m_{1:t})}. \label{rhs2:comm}
\end{align}
Similarly, the second  term (corresponding to $i=2$) on the right hand side of \eqref{eq:indepen2} for $t+1$ can be written as 
\begin{align}\label{rhs2:t+1}
&\mathds{1}_{(u^2_{t+1}=g^2_{t+1}(x^2_{1:t+1},u^2_{1:t},z{1:t+1},m_{1:t+1}))}\prob(z_{t+1}|x^2_{1:t+1},u^2_{1:t},z_{1:t},m_{1:t+1})\times\notag\\
&\frac{\prob(m^1_{t+1}|x^2_{1:t+1},u^2_{1:t},z_{1:t},m_{1:t})\mathds{1}_{(m^2_{t+1}=f^2_{t+1}(x^2_{1:t+1},u^2_{1:t},z_{1:t},,m_{1:t})}\prob(x^2_{1:t+1},u^2_{1:t}|z_{1:t},m_{1:t})}{\prob(z_{t+1},m_{t+1} | z_{1:t},m_{1:t})}.
\end{align}

Comparing \eqref{rhs2:inductive}, \eqref{rhs2:comm} and \eqref{rhs2:t+1}, it is clear that  we just need to prove that
\begin{align}
\frac{\prob(z_{t+1}|x_{1:t+1},u_{1:t},z_{1:t},m_{1:t+1})}{\prob(z_{t+1},m_{t+1} | z_{1:t},m_{1:t})} &= \frac{\prob(z_{t+1}|x^1_{1:t+1},u^1_{1:t},z_{1:t},m_{1:t+1})\prob(m^2_{t+1}|x^1_{1:t+1},u^1_{1:t},z_{1:t},m_{1:t})}{\prob(z_{t+1},m_{t+1} | z_{1:t},m_{1:t})} \times \notag\\ &\frac{\prob(z_{t+1}|x^2_{1:t+1},u^2_{1:t},z_{1:t},m_{1:t+1})\prob(m^1_{t+1}|x^2_{1:t+1},u^2_{1:t},z_{1:t},m_{1:t})}{\prob(z_{t+1},m_{t+1} | z_{1:t},m_{1:t})}. \label{eq:new_11}
\end{align}
in order to establish \eqref{eq:indepen2} for $t+1$. We consider two cases:

\textbf{Case I: } $Z_{t+1}=\phi$ and $M_{t+1} = (0,0)$. In this case, the left hand side of \eqref{eq:new_11} can be written as

\begin{align}
\frac{\prob(Z_{t+1}= \phi|x_{1:t+1},u_{1:t},z_{1:t},m_{1:t},M_{t+1}=(0,0))}{\prob(Z_{t+1}=\phi,M_{t+1} = (0,0)|z_{1:t},m_{1:t})} = \frac{1}{\prob(M^1_{t+1} = 0|z_{1:t},m_{1:t})\prob(M^2_{t+1}=0|z_{1:t},m_{1:t})} \label{eq:new66}
\end{align}
Similarly, the right hand side of  \eqref{eq:new_11} can be written as 
\begin{align}
 &\frac{\prob(M^2_{t+1}=0|x^1_{1:t+1},u^1_{1:t},z_{1:t},m_{1:t})}{\prob(M^1_{t+1} = 0|z_{1:t},m_{1:t})\prob(M^2_{t+1}=0|z_{1:t},m_{1:t})} \times \frac{\prob(M^1_{t+1}=0|x^2_{1:t+1},u^2_{1:t},z_{1:t},m_{1:t})}{\prob(M^1_{t+1} = 0|z_{1:t},m_{1:t})\prob(M^2_{t+1}=0|z_{1:t},m_{1:t})} \notag \\
 &= \frac{\prob(M^2_{t+1}=0|z_{1:t},m_{1:t})}{\prob(M^1_{t+1} = 0|z_{1:t},m_{1:t})\prob(M^2_{t+1}=0|z_{1:t},m_{1:t})} \times \frac{\prob(M^1_{t+1} = 0|z_{1:t},m_{1:t})}{\prob(M^1_{t+1} = 0|z_{1:t},m_{1:t})\prob(M^2_{t+1}=0|z_{1:t},m_{1:t})} \notag \\
 &=\frac{1}{\prob(M^1_{t+1} = 0|z_{1:t},m_{1:t})\prob(M^2_{t+1}=0|z_{1:t},m_{1:t})} \label{eq:new55}
\end{align}
where we used the fact that $M^i_{t+1}$ and $X^j_{t+1}$ are independent for $i \neq j$. Comparing \eqref{eq:new66} and \eqref{eq:new55} establishes \eqref{eq:new_11} for this case.

\textbf{Case II: } $Z_{t+1}=(\tilde{x}^1_{t+1}, \tilde{x}^2_{t+1}) $ and $M_{t+1} = (m^1_{t+1},m^2_{t+1}) \neq (0,0)$. In this case, the left hand side of \eqref{eq:new_11} can be written as

\begin{align}
&\frac{\prob(Z_{t+1}= (\tilde{x}^1_{t+1}, \tilde{x}^2_{t+1})|x_{1:t+1},u_{1:t},z_{1:t},m_{1:t},M_{t+1}=(m^1_{t+1},m^2_{t+1}))}{\prob(Z_{t+1}=(\tilde{x}^1_{t+1}, \tilde{x}^2_{t+1}),M_{t+1} = (m^1_{t+1},m^2_{t+1})|z_{1:t},m_{1:t})} \notag\\
&= \frac{\mathds{1}_{\{x_{t+1}=(\tilde{x}^1_{t+1}, \tilde{x}^2_{t+1})\}}}{\prob(X_{t+1}=(\tilde{x}^1_{t+1}, \tilde{x}^2_{t+1}),M_{t+1} = (m^1_{t+1},m^2_{t+1})|z_{1:t},m_{1:t})} \notag \\
&= \frac{\mathds{1}_{\{x_{t+1}=(\tilde{x}^1_{t+1}, \tilde{x}^2_{t+1})\}}}{\prob(X^1_{t+1}=\tilde{x}^1_{t+1}, M^1_{t+1}= m^1_{t+1}|z_{1:t},m_{1:t}) \prob(X^2_{t+1}=\tilde{x}^2_{t+1}, M^2_{t+1} =m^2_{t+1}|z_{1:t},m_{1:t})} \label{eq:new22}
\end{align}
where we used  \eqref{eq:indepen}   for time $t+1$.
%&= \frac{\mathds{1}_{\{x_{t+1}=(\tilde{x}^1_{t+1}, \tilde{x}^2_{t+1})\}}}{\prob(X_{t+1}=(\tilde{x}^1_{t+1}, \tilde{x}^2_{t+1}))P(m^1_{t+1}|{x}^1_{t+1},z_{1:t},m_{1:t})P(m^2_{t+1}|{x}^2_{t+1},z_{1:t},m_{1:t})}

Consider the first term on the right hand side of \eqref{eq:new_11}. It can be written as
\begin{align}
&\frac{\prob(Z_{t+1}=(\tilde{x}^1_{t+1}, \tilde{x}^2_{t+1})|x^1_{1:t+1},u^1_{1:t},z_{1:t},m_{1:t},M_{t+1}=(m^1_{t+1},m^2_{t+1}))\prob(M^2_{t+1}=m^2_{t+1}|x^1_{1:t+1},u^1_{1:t},z_{1:t},m_{1:t})}{\prob(Z_{t+1}=(\tilde{x}^1_{t+1}, \tilde{x}^2_{t+1}),M_{t+1} = (m^1_{t+1},m^2_{t+1})|z_{1:t},m_{1:t})} \notag\\
& =  \frac{\mathds{1}_{\{x^1_{t+1}=\tilde{x}^1_{t+1}\}}\prob({X}^2_{t+1}=\tilde{x}^2_{t+1}|z_{1:t},m_{1:t},m^2_{t+1})\prob(m^2_{t+1}|z_{1:t},m_{1:t}) }{\prob(X^1_{t+1}=\tilde{x}^1_{t+1}, M^1_{t+1}= m^1_{t+1}|z_{1:t},m_{1:t}) \prob(X^2_{t+1}=\tilde{x}^2_{t+1}, M^2_{t+1} =m^2_{t+1}|z_{1:t},m_{1:t})} \notag \\
& =  \frac{\mathds{1}_{\{x^1_{t+1}=\tilde{x}^1_{t+1}\}}\prob({X}^2_{t+1}=\tilde{x}^2_{t+1},M^2_{t+1}=m^2_{t+1}|z_{1:t},m_{1:t}) }{\prob(X^1_{t+1}=\tilde{x}^1_{t+1}, M^1_{t+1}= m^1_{t+1}|z_{1:t},m_{1:t}) \prob(X^2_{t+1}=\tilde{x}^2_{t+1}, M^2_{t+1} =m^2_{t+1}|z_{1:t},m_{1:t})} \notag\\
&=\frac{\mathds{1}_{\{x^1_{t+1}=\tilde{x}^1_{t+1}\}} }{\prob(X^1_{t+1}=\tilde{x}^1_{t+1}, M^1_{t+1}= m^1_{t+1}|z_{1:t},m_{1:t})}
\label{eq:new33}
\end{align} 
where we used the fact that $(X^i_{t+1},M^i_{t+1})$ are independent of  $X^j_{t+1}$  for $i \neq j$. Similarly, the second term on the right hand side of  \eqref{eq:new_11} can be written as 
\begin{align}
&\frac{\mathds{1}_{\{x^2_{t+1}=\tilde{x}^2_{t+1}\}} }{\prob(X^2_{t+1}=\tilde{x}^2_{t+1}, M^2_{t+1}= m^2_{t+1}|z_{1:t},m_{1:t})}\label{eq:new44}
\end{align} 
Comparing \eqref{eq:new22}, \eqref{eq:new33} and \eqref{eq:new44} establishes \eqref{eq:new_11} for this case.

\section{Proof of Proposition 1} \label{proof:prop1}
%\blue{Moved from main text:}
%\begin{lemma}
 We will prove the result for agent $i$. Throughout this proof, we fix agent $-i's$ communication and control strategies to be  $f^{-i}$,$g^{-i}$ (where f$^{-i}$,$g^{-i}$ are arbitrarily chosen). Define $R^i_t=(X^i_{t},Z_{1:t-1},M^{1,2}_{1:t-1})$ and $R^i_{t^+}=(X^i_{t},Z_{1:t},M^{1,2}_{1:t})$. Our proof will rely on  the following two facts:
 
  \textbf{Fact 1}: $\{R^i_1, R^i_{1^+}, R^i_2, R^i_{^2+}, ....R^i_T, R^i_{T^+}\}$ is a controlled Markov process for agent $i$. 
  More precisely, for any strategy choice $f^i, g^i$ of agent $i$, 
%Write (A) and (B),
\begin{align}
&\prob(R^i_{t^+}=\tilde{r}^i_{t^+}|R^i_{1:t}=r^i_{1:t},M^i_{1:t}=m^i_{1:t})=\prob(R^i_{t^+}=\tilde{r}^i_{t^+}|R^i_{t}=r^i_{t},M^i_{t}=m^i_{t}) \label{eq:mdpA} \\
& \prob(R^{i}_{t+1}=\tilde{r}^{i}_{t+1}|R^i_{1:{t^+}}=r^i_{1:{t^+}},U^i_{1:t}=u^i_{1:t})=\prob(R^{i}_{t+1}=\tilde{r}^{i}_{t+1}|R^i_{{t^+}}=r^i_{{t^+}},U^i_{t}=u^i_{t}) \label{eq:mdpB}
\end{align}
where the probabilities on the right hand side of  \eqref{eq:mdpA} and \eqref{eq:mdpB} do not depend on $f^i,g^i$.

  \textbf{Fact 2}: The costs at time $t$ satisfy
%%Write LHS of C = k^i_t(R^i_t, M^i_t)
%%Write LHS of D = k^i_{t^+}(R^i_t^+, U^i_t),
\begin{align}
& \ee[\rho\mathds{1}_{(M^{or}_t=1)}|R^i_{1:t}=r^i_{1:t},M^i_{1:t}=m^i_{1:t}]= \kappa^i_t(r^i_t, m^i_t) \label{eq:mdpC}\\
& \ee[c_t({X}_t,{U}_t)|R^i_{1:{t^+}}=r^i_{1:{t^+}},U^i_{1:t}=u^i_{1:t}] =\kappa^i_{t^+}(r^i_{t^+}, u^i_t) \label{eq:mdpD}
 \end{align}
 %(R^i_{t^+}, U^i_t)
 %=\k^i_{t^+}(R^i_t^+, U^i_t) 
  where the functions $\kappa^i_t, \kappa^i_{t^+}$ in \eqref{eq:mdpC} and \eqref{eq:mdpD}  do not depend on $f^i, g^i$.

Suppose that Facts 1 and 2 are true. Then, the strategy optimization problem for agent $i$ can be viewed as a MDP over $2T$ time steps (i.e. time steps $1, 1^+, 2, 2^+, \ldots, T, T^+$) with $R^i_t$ and $M^i_t$ as the state and action at time $t$; and $R^i_{t^+}$ and $U^i_t$ as the state and action for time $t^+$. Note that at time $t$, agent $i$ observes $R^i_t$, selects $M^i_t$ and the ``state'' transitions to $R^i_{t^+}$ according to Markovian dynamics (see \eqref{eq:mdpA}). Similarly, at time $t^+$, agent $i$ observes $R^i_{t^+}$, selects $U^i_t$ and the ``state'' transitions to $R^i_{t+1}$ according to Markovian dynamics (see \eqref{eq:mdpB}). Further, from agent $i$'s perspective, the cost at time $t$ depends on the current state and action (i.e. $R^i_t$ and $M^i_t$, see \eqref{eq:mdpC}) and the cost at time $t^+$ depends on the state and action at $t^+$ (i.e. $R^i_{t^+}$ and $U^i_t$, see \eqref{eq:mdpD}).  It then follows from standard MDP results that agent $i$'s strategy should be of the form: 
\begin{equation} 
   M^i_t=  \bar{f}^i_t(R^i_{t}) =  \bar{f}^i_t(X^i_{t},Z_{1:t-1},M^{1,2}_{1:t-1}),
\end{equation}
%M^i_t= \bar{f}^i_t(X^i_{t},Z_{1:t-1},M^{1,2}_{1:t-1})
% U^i_t= \bar{g}^i_t(X^i_{t},Z_{1:t},M^{1,2}_{1:t})
\begin{equation}  
   U^i_t=  \bar{g}^i_t(R_{t^+}) = \bar{g}^i_t(X^i_{t}, Z_{1:t},M^{1,2}_{1:t}),
\end{equation}
which establishes the result of the proposition (recall that $C_t = (Z_{1:t-1},M^{1,2}_{1:t-1})$ and $C_{t^+}=(Z_{1:t},M^{1,2}_{1:t}$).

We now prove Facts 1 and 2. 

(i) Let $\tilde{r}^i_{t^+} = ({x}^i_t,{z}_{1:t},{m}_{1:t})$ and $r^i_{1:t} = (x^i_{1:t},z_{1:t-1},m_{1:t-1})$. Then, the left hand side of \eqref{eq:mdpA} can be written as 
\begin{align}
 &\prob(R^i_{t^+}=({x}^i_t,{z}_{1:t},{m}_{1:t})|R^i_{1:t}=(x^i_{1:t},z_{1:t-1},m_{1:t-1}),M^i_{1:t}=m^i_{1:t})\notag\\
    &=\prob({Z}_t={z}_t|x^i_{1:t},z_{1:t-1},m_{1:t})\prob({M}^{-i}_t={m}^{-i}_t|x^i_{1:t},z_{1:t-1},m_{1:t-1},m^i_{t}) \notag\\
    &= \prob({Z}_t={z}_t|x^i_{1:t},z_{1:t-1},m_{1:t})\prob({M}^{-i}_t={m}^{-i}_t|z_{1:t-1},m_{1:t-1},m^i_{t})  \label{eq:mark4}
\end{align}
where \eqref{eq:mark4} follows from the conditional independence property of Lemma \ref{LEM:INDEPEN}.
 We can further simplify the first term in  \eqref{eq:mark4} for different cases as follows: 
 
\textbf{Case I:} when $Z_t=\tilde{x}^{1,2}_t$ and ${M}_t=(m^1_t,m^2_t) \neq (0,0)$
\begin{align}
    \prob(Z_t=\tilde{x}^{1,2}_t|x^1_{1:t},z_{1:t-1},m_{1:t})=\mathds{1}_{(\tilde{x}^i_{t}={x}^i_{t})}\prob(\tilde{x}^{-i}_t|z_{1:t-1},m_{1:t}) \label{zt:A1}
\end{align}
\textbf{Case II:} when $Z_t=\phi$ and ${M}_t=(0,0)$
\begin{align}
    \prob(Z_t=\phi|x^1_{1:t},z_{1:t-1},m_{1:t})=1 \label{zt:A2}
\end{align}
We note that in both  cases above $x^i_{1:t-1}$ does not affect the probability.  Further, the probabilities in the two cases do not depend on agent $i$'s strategy.

Repeating the above steps for the right hand side of \eqref{eq:mdpA} establishes that the two sides of  \eqref{eq:mdpA} are equal.

(ii) \eqref{eq:mdpB} is a direct consequence of the Markovian state dynamics of agent $i$.

(iii) In \eqref{eq:mdpC}, it is straightforward to see that if $m^i_t =1$, then the left hand side is simply $\rho$. If, on the other hand, $m^i_t =0$, then the left hand side of \eqref{eq:mdpC} can be written as 
\begin{align}
&\rho\prob(M^{-i}_t =1|r^i_{1:t},m^i_{1:t}) = \rho\prob({M}^{-i}_t=1|x^i_{1:t},z_{1:t-1},m_{1:t-1},m^i_{t}) \notag \\
&=\rho\prob({M}^{-i}_t=1|z_{1:t-1},m_{1:t-1},m^i_{t}) \label{eq:markA1}
\end{align}
where \eqref{eq:markA1} follows from the conditional independence property of Lemma \ref{LEM:INDEPEN}. The right hand side of \eqref{eq:markA1} is a function only of $r^i_t$ and $m^i_t$ and does not depend on agent $i$'s strategy. This completes the proof of \eqref{eq:mdpC}. 

(iv) To prove \eqref{eq:mdpD}, it suffices to show that 
\begin{align}
 \prob(x^{-i}_t,u^{-i}_t|(x^i_{1:t},z_{1:t},m_{1:t}),u^i_{1:t})=  \prob(x^{-i}_t,u^{-i}_t|(x^i_{t},z_{1:t},m_{1:t}),u^i_{t}) \label{eq:markA2}
\end{align}
\eqref{eq:markA2} follows from the conditional independence property of Lemma \ref{LEM:INDEPEN}.

%\input{AN_AppC.tex}
%%%Dhruva: Use this file for editing  Appendix C%%%

\section{Proof of Lemma \ref{LEM:UPDATE}}\label{proof:Coord}
Recall that at the beginning of time $t$, the common information is given by $C_{t}:=(Z_{1:t-1},M^{1,2}_{1:t-1})$ (see \eqref{commoninfo}).
At the end of time $t$, i.e. after the communication decisions are made at time $t$, the common information is given by $C_{t^+}:=(Z_{1:t},M^{1,2}_{1:t})$ (see \eqref{commoninfoplus}). Let $c_{t} := (z_{1:t-1},m_{1:t-1}^{1,2})$ and $c_{t^+} = c_{t+1} := (z_{1:t},m_{1:t}^{1,2})$ be realizations of $C_t$, $C_{t^+}$ and $C_{t+1}$ respectively. Let $\gamma_{1:t},\lambda_{1:t}$ be the realizations of the coordinator's prescriptions $\Gamma_{1:t},\Lambda_{1:t}$ up to time $t$. Let us assume that the realizations $c_{t+1},\gamma_{1:t},\lambda_{1:t}$ have non-zero probability. Let $\pi_t^i$, $\pi_{t^+}^i$ and $\pi_{t+1}^i$ be the corresponding realizations of the coordinator's beliefs $\Pi_t^i$, $\Pi_{t^+}^i$, and $\Pi_{t+1}^i$ respectively. These beliefs are given by
\begin{align}
\pi_{t}^i(x_t^i)&=\prob(X_t^i=x_t^i|C_t=(z_{1:t-1},m^{1,2}_{1:t-1}),\Gamma_{1:t-1}=\gamma_{1:t-1},\Lambda_{1:t-1}=\lambda_{1:t-1})\label{updatelem3:1}\\
    \pi_{t^+}^i(x_{t^+}^i)&=\prob(X_{t}^i=x_{t}^i|C_{t^+}=(z_{1:t},m^{1,2}_{1:t}),\Gamma_{1:t}=\gamma_{1:t},\Lambda_{1:t-1}=\lambda_{1:t-1}).\label{updatelem3:2}\\
    \pi_{t+1}^i(x_{t+1}^i)&=\prob(X_{t+1}^i=x_{t+1}^i|C_{t+1}=(z_{1:t},m^{1,2}_{1:t}),\Gamma_{1:t}=\gamma_{1:t},\Lambda_{1:t}=\lambda_{1:t}).
\end{align}

There are two possible cases: (i) $Z_t = (\tilde{x}_t^{1},\tilde{x}_t^{2})$ for some $(\tilde{x}_t^{1},\tilde{x}_t^{2}) \in \mathcal{X}^1\times \mathcal{X}^2$ or (ii) $Z_t = \phi$. Let us analyze these two cases separately.

\textbf{Case I:} When $Z_t = (\tilde{x}_t^{1},\tilde{x}_t^{2})$ for some $(\tilde{x}_t^{1},\tilde{x}_t^{2}) \in \mathcal{X}^1\times \mathcal{X}^2$, at least one of the agents must have decided to communicate at time $t$. As described in \eqref{zt}, the variable $Z_t = X_t$ when communication occurs. Thus, we have
\begin{align}
    \pi_{t^+}^i(x_t^i)&=\mathds{1}_{(\tilde{x}^{i}_{t}=x^i_t)}. \label{updatelem3:3}
\end{align}

\textbf{Case II:} When ${Z}_t=\phi$, ${M}^{1,2}_t=(0,0)$ (see \eqref{zt}). Using the Bayes' rule, we have
\begin{align}
    \pi_{t^+}^i(x_t^i)&=P(X_t^i=x_{t}^i|z_{1:t},m^{1,2}_{1:t},\gamma_{1:t},\lambda_{1:t-1})\notag\\
    &=\frac{P(X_t^i=x_{t}^i,Z_t=\phi,M^{1,2}_t=(0,0)|z_{1:t-1},m^{1,2}_{1:t-1},\gamma_{1:t},\lambda_{1:t-1})}{P(Z_t=\phi,M^{1,2}_t=(0,0)|z_{1:t-1},m^{1,2}_{1:t-1},\gamma_{1:t},\lambda_{1:t-1})}\notag\\
    &=\frac{P(Z_t=\phi|x_{t}^i,c_t,\gamma_{1:t},\lambda_{1:t-1},M^{1,2}_t=(0,0))P(M^{1,2}_t=(0,0)|x_t^i,c_t,\gamma_{1:t},\lambda_{1:t-1}) P(X_t^i=x_{t}^i|c_t,\gamma_{1:t},\lambda_{1:t-1})   }{\sum_{\hat{x}_t}P(Z_t=\phi|\hat{x}_{t}^i,c_t,\gamma_{1:t},\lambda_{1:t-1},M^{1,2}_t=(0,0))P(M^{1,2}_t=(0,0)|\hat{x}_t^i,c_t,\gamma_{1:t},\lambda_{1:t-1}) P(X_t^i=\hat{x}_{t}^i|c_t,\gamma_{1:t},\lambda_{1:t-1})   }\notag\\
    &\stackrel{a}{=}\frac{P(M^{1,2}_t=(0,0)|x_t^i,c_t,\gamma_{1:t},\lambda_{1:t-1}) P(X_t^i=x_{t}^i|c_t,\gamma_{1:t},\lambda_{1:t-1})  }{\sum_{\hat{x}_t^i} P(M^{1,2}_t=(0,0)|\hat{x}_t^i,c_t,\gamma_{1:t},\lambda_{1:t-1})P(X_t^i=\hat{x}_{t}^i|c_t,\gamma_{1:t},\lambda_{1:t-1}) }\notag\\
    &\stackrel{b}{=}\frac{P(M^{1,2}_t=(0,0)|x_t^i,c_t,\gamma_{1:t},\lambda_{1:t-1}) P(X_t^i=x_{t}^i|c_t,\gamma_{1:t-1},\lambda_{1:t-1})  }{\sum_{\hat{x}_t^i} P(M^{1,2}_t=(0,0)|\hat{x}_t^i,c_t,\gamma_{1:t},\lambda_{1:t-1})P(X_t^i=\hat{x}_{t}^i|c_t,\gamma_{1:t-1},\lambda_{1:t-1}) }\notag\\
    %&\stackrel{c}{=}\frac{\mathds{1}_{(\gamma^1_t(x^1_t)=0)}\mathds{1}_{(\gamma^2_t(x^2_t)=0)}P(X_t=x_{t}|c_t,\gamma_{1:t-1},\lambda_{1:t-1}) }{\sum_{\hat{x}_t}\mathds{1}_{(\gamma^1_t(\hat{x}^1_t)=0)}\mathds{1}_{(\gamma^2_t(\hat{x}^2_t)=0)}P(X_t=\hat{x}_{t}|c_t,\gamma_{1:t-1},\lambda_{1:t-1})}\notag\\ 
    &\stackrel{c}{=}\frac{\mathds{1}_{(\gamma^i_t(x^i_t)=0)}\pi_t^i(x_t^i)}{\sum_{\hat{x}_t^i}\mathds{1}_{(\gamma^i_t(\hat{x}^i_t)=0)}\pi_t^i(\hat{x}_t^i)}.\label{updatelem3:4}
\end{align}
In the display above, equation $(a)$ follows from the fact that $Z_t = \phi$ if and only if $M^{1,2}_t = (0,0)$ (see \eqref{zt}). In equation $(b)$, we drop $\gamma_t$ from the term $ P(X_t^i=x_{t}^i|c_t,\gamma_{1:t},\lambda_{1:t-1})$ because $\gamma_t$ is a function of the rest of them terms in the conditioning given the coordinator's strategy. Due to Lemma \ref{LEM:INDEPEN}, $X_t^1$ and $X_t^2$ are independent conditioned\footnote{Given the coordinator's stratey, conditioning on $C_t$ and conditioning on $C_t,\Gamma_{1:t},\Lambda_{1:t-1}$ are the same because the prescriptions are functions of the common information.} on $C_t = c_t$. This conditional independence property and the fact that $M_t^i = \Gamma_t^i(X_t^i)$, (see \eqref{commact1}) leads to equation $(c)$.
Hence, we can update the coordinator's beliefs $\pi_{t^+}^i$ ($i=1,2$) using $\pi_t^i,\gamma_t^i$ and $z_t$ as:
\begin{equation}
  \pi^i_{t^+}(x_t^i)=\begin{cases}
    \frac{\mathds{1}_{(\gamma^i_t(x^i_t)=0)}\pi_t^i(x_t^i)}{\sum_{\hat{x}^i_t}\mathds{1}_{(\gamma^i_t(\hat{x}^i_t)=0)}\pi_t^i(\hat{x}_t^i)}, & \text{if $Z_t=\phi$}.\\
    \mathds{1}_{(x^i_t=\Tilde{x}^i_t)}, & \text{if $Z_t =  (\tilde{x}_t^{1},\tilde{x}_t^{2})$}.
  \end{cases}\label{updatelem3:11}
\end{equation}
We denote the update rule described above with $\eta^i_t$, i.e.
\begin{align}
\pi_{t^+}^i = \eta_t^i(\pi_t^i,\gamma_t^i,z_t).
\end{align}

Further, using the law of total probability, we have

\begin{align}
    &\pi_{t+1}^i(x_{t+1}^i)\\
    &=\prob(X_{t+1}^i =x_{t+1}^i|z_{1:t},m^{1,2}_{1:t},\gamma_{1:t},\lambda_{1:t})\\
    &= \sum_{x_t^i}\sum_{u_t^i}\prob(X_{t+1}^i = x_{t+1}^i|x_t^i,u_t^i,z_{1:t},m^{1,2}_{1:t},\gamma_{1:t},\lambda_{1:t})\prob(U_t^i = u_t^i|x_t^i,z_{1:t},m^{1,2}_{1:t},\gamma_{1:t},\lambda_{1:t})\prob(X_t^i  =x_{t}^i|z_{1:t},m^{1,2}_{1:t},\gamma_{1:t},\lambda_{1:t})\\
    &\stackrel{a}{=} \sum_{x_t^i}\sum_{u_t^i}\prob(X_{t+1}^i = x_{t+1}^i|x_t^i,u_t^i,z_{1:t},m^{1,2}_{1:t},\gamma_{1:t},\lambda_{1:t})\prob(U_t^i = u_t^i|x_t^i,z_{1:t},m^{1,2}_{1:t},\gamma_{1:t},\lambda_{1:t})\prob(X_t^i  =x_{t}^i|z_{1:t},m^{1,2}_{1:t},\gamma_{1:t},\lambda_{1:t-1})\\
    &\stackrel{b}{=} \sum_{x_t^i}\sum_{u_t^i}\prob(X_{t+1}^i = x_{t+1}^i|x_t^i,u_t^i,z_{1:t},m^{1,2}_{1:t},\gamma_{1:t},\lambda_{1:t})\mathds{1}_{(u_t^i=\lambda_t^i(x_t^i))}\pi_{t^+}^i(x_t^i)\\
    &\stackrel{c}{=} \sum_{x_t^i}\sum_{u_t^i}\prob(X_{t+1}^i = x_{t+1}^i|x_t^i,u_t^i)\mathds{1}_{(u_t^i=\lambda_t^i(x_t^i))}\pi_{t^+}^i(x_t^i).
\end{align}
In equation $(a)$ in the display above, we drop $\lambda_t$ from $\prob(X_t^i  =x_{t}^i|z_{1:t},m^{1,2}_{1:t},\gamma_{1:t},\lambda_{1:t})$ since $\lambda_t$ is a function of the rest of the terms in the conditioning given the coordinator's strategy. Equation $(b)$ follows from  \eqref{controlact1}. Equation $(c)$ follows from the system dynamics in \eqref{dyna}.
We denote the update rule described above with $\beta^i_t$, i.e.
\begin{align}
\pi_{t+1}^i = \beta_t^i(\pi_{t^+}^i,\lambda_t^i).
\end{align}

%erasure
%%%Erasure Proof%%%

\section{Erasure Model}\label{proof:Erasure model}
 Based on the erasure communication model described in Section 5, the variable $Z^{er}_t$ (information exchanged between agents in the erasure model) is defined as:
\begin{equation}
  Z^{er}_t=\begin{cases}
    X^{1,2}_t, ~ w.p. ~1-p_e & \text{if $M^{or}_t=1$}.\\
    \phi, ~~~~~ w.p. ~~~~p_e & \text{if $M^{or}_t=1$}.\\
    \phi, & \text{if $M^{or}_t=0$}.
  \end{cases}
\end{equation}
In the following subsections, we prove Lemma \ref{LEM:INDEPEN}, Proposition I and Lemma \ref{LEM:UPDATE} for the erasure communication model.
\subsection{Proof of Lemma \ref{LEM:INDEPEN}}
\label{proof:CI1}
We prove the lemma by induction. At $t=1$, before communication decisions are made, \eqref{eq:indepen} is trivially true since there is no common information at this point and the agents' initial states are independent. For \eqref{eq:indepen2} at $t=1$, recall that $c_{1^+} = (z^{er}_1, m_1)$. The left hand side of \eqref{eq:indepen2} for $t=1$ can be written as

\begin{align}
& \prob({x}_1,{u}_{1}|z^{er}_{1}, {m}_{1})=\frac{\prob({x}_1,{u}_{1},z^{er}_{1},{m}_{1})}{\prob(z^{er}_{1},{m}_{1})}  \notag\\
&=\frac{\prob({u}_{1}|{x}_1,z^{er}_{1},{m}_{1})\prob(z^{er}_{1}|{x}_1,{m}_{1})\prob(m_{1}|x_1)\prob(x_1)}{\prob(z^{er}_{1},m_{1})} \notag \\
 &\frac{\mathds{1}_{(u^1_1=g^1_1(x^1_1,z^{er}_{1},m_1))}\mathds{1}_{(u^2_1=g^2_1(x^2_1,z^{er}_{1},m_1))}\prob(z^{er}_{1}|x_1,m_{1})\mathds{1}_{(m^2_1=f^2_1(x^2_1))}\mathds{1}_{(m^1_1=f^1_1(x^1_1))}\prob(x^1_1)\prob(x^2_1)}{\prob(z^{er}_{1},m_{1})}. \label{ern11:base}
\end{align}

The first term (corresponding to $i=1$) on the right hand side of \eqref{eq:indepen2} for $t=1$ can be written as 
\begin{align}
     &\prob(x^1_1,u^1_{1}|z^{er}_{1},m_{1})=\frac{\prob(x^1_1,u^1_{1},z^{er}_{1},m_{1})}{\prob(z^{er}_{1},m_{1})}\notag\\
     &=\frac{\prob(u^1_{1}|x^1_1,z^{er}_{1},m_{1})\prob(z^{er}_{1}|x^1_1,m_{1})\prob(m_{1}|x^1_1)\prob(x^1_1)}{\prob(z^{er}_{1},m_{1})}\notag\\
    &=\frac{\mathds{1}_{(u^1_1=g^1_1(x^1_1,z^{er}_1,m_1))}\prob(z^{er}_{1}|x^1_1,m_{1})\mathds{1}_{(m^1_1=f^1_1(x^1_1))}\prob(m^2_{1}|x^1_1)\prob(x^1_1)}{\prob(z^{er}_{1},m_{1})}. \label{eafter11:comm}
\end{align}
Similarly, the second term (corresponding to $i=2$) on the right hand side of \eqref{eq:indepen2}  for $t=1$ can be written as
\begin{equation}\label{eafter22:comm}
    \frac{\mathds{1}_{(u^2_1=g^2_1(x^2_1,z^{er}_1,m_1))}\prob(z^{er}_{1}|x^2_1,m_{1})\mathds{1}_{(m^2_1=f^2_1(x^2_1))}\prob(m^1_{1}|x^2_1)\prob(x^2_1)}{\prob(z^{er}_{1},m_{1})}.
\end{equation}
Comparing \eqref{ern11:base}, \eqref{eafter11:comm} and \eqref{eafter22:comm}, it is clear that  we just need to prove that
\begin{align}
\frac{\prob(z^{er}_{1}|x_1,m_{1})}{\prob(z^{er}_{1},m_{1})}  = \frac{\prob(z^{er}_{1}|x^1_1,m_{1})\prob(m^2_{1}|x^1_1)}{\prob(z^{er}_{1},m_{1})} \times \frac{\prob(z^{er}_{1}|x^2_1,m_{1})\prob(m^1_{1}|x^2_1)}{\prob(z^{er}_{1},m_{1})} \label{eeq:new_11}
\end{align}
in order to establish \eqref{eq:indepen2} for $t=1$. We consider three cases:

\textbf{Case I: } $Z^{er}_1=\phi$ and $M_1 = (0,0)$. In this case, the left hand side of \eqref{eeq:new_11} can be written as 
\begin{align}
\frac{\prob(Z^{er}_{1}= \phi|X_1=x_1,M_{1}=(0,0))}{\prob(Z^{er}_{1}=\phi,M_{1} = (0,0))} = \frac{1}{\prob(M^1_{1} = 0)\prob(M^2_1=0)} \label{eeq:new66}
\end{align}
Similarly, the right hand side of  \eqref{eeq:new_11} can be written as 
\begin{align}
 &\frac{P(M^2_1=0|X^1_1=x^1_1)}{\prob(M^1_{1} = 0)\prob(M^2_1=0)} \times \frac{P(M^1_1=0|X^2_1=x^2_1)}{\prob(M^1_{1} = 0)\prob(M^2_1=0)} \notag \\
 &= \frac{P(M^2_1=0)}{\prob(M^1_{1} = 0)\prob(M^2_1=0)} \times \frac{P(M^1_1=0)}{\prob(M^1_{1} = 0)\prob(M^2_1=0)} \notag \\
 &= \frac{1}{\prob(M^1_{1} = 0)\prob(M^2_1=0)} \label{eeq:new55}
\end{align}
where we used the fact that $M^i_1$ and $X^j_1$ are independent for $i \neq j$. Comparing \eqref{eeq:new66} and \eqref{eeq:new55} establishes \eqref{eeq:new_11} for this case.

\textbf{Case II: } $Z^{er}_1=(\tilde{x}^1_1, \tilde{x}^2_1) $ and $M_1 = (m^1_1,m^2_1) \neq (0,0)$. \footnote{This case occurs only if $p_e<1$.}In this case, the left hand side of \eqref{eeq:new_11} can be written as 
\begin{align}
&\frac{\prob(Z^{er}_{1}= (\tilde{x}^1_1, \tilde{x}^2_1)|X_1=x_1,M_{1}=(m^1_1,m^2_1))}{\prob(Z^{er}_{1}= (\tilde{x}^1_1, \tilde{x}^2_1),M_{1}=(m^1_1,m^2_1))} = \frac{(1-p_e)\mathds{1}_{\{x_1=(\tilde{x}^1_1, \tilde{x}^2_1)\}}}{(1-p_e)\prob(X_{1}= (\tilde{x}^1_1, \tilde{x}^2_1),M_{1}=(m^1_1,m^2_1))} \notag \\
&= \frac{\mathds{1}_{\{x_1=(\tilde{x}^1_1, \tilde{x}^2_1)\}}}{\prob(X_{1}= (\tilde{x}^1_1, \tilde{x}^2_1))\mathds{1}_{(m^1_1=f^1_1(\tilde{x}^1_1))}\mathds{1}_{(m^2_1=f^2_1(\tilde{x}^2_1))}} \label{eeq:new22}
\end{align}
Consider the first term on the right hand side of \eqref{eeq:new_11}. It can be written as
\begin{align}
&\frac{\prob(Z^{er}_{1} = (\tilde{x}^1_1, \tilde{x}^2_1) |X^1_1 = x^1_1, M_1 =m_{1})\prob(M^2_1 = m^2_{1}|X^1_1=x^1_1)}{\prob(Z^{er}_{1}= (\tilde{x}^1_1, \tilde{x}^2_1),M_{1}=(m^1_1,m^1_2))} \notag \\
& =  \frac{(1-p_e)\mathds{1}_{\{x^1_1=\tilde{x}^1_1\}}\prob(X^2_1 = \tilde{x}^2_1|M^2_1 = m^2_1)\prob(M^2_1 = m^2_{1}) }{(1-p_e)\prob(X_{1}= (\tilde{x}^1_1, \tilde{x}^2_1))\mathds{1}_{(m^1_1=f^1_1(\tilde{x}^1_1))}\mathds{1}_{(m^2_1=f^2_1(\tilde{x}^2_1))}} \notag \\
&= \frac{\mathds{1}_{\{x^1_1=\tilde{x}^1_1\}}\prob(X^2_1 = \tilde{x}^2_1, M^2_1 = m^2_1) }{\prob(X_{1}= (\tilde{x}^1_1, \tilde{x}^2_1))\mathds{1}_{(m^1_1=f^1_1(\tilde{x}^1_1))}\mathds{1}_{(m^2_1=f^2_1(\tilde{x}^2_1))}} \notag \\
&= \frac{\mathds{1}_{\{x^1_1=\tilde{x}^1_1\}}\prob(X^2_1 = \tilde{x}^2_1)\mathds{1}_{(m^2_1=f^2_1(\tilde{x}^2_1))} }{\prob(X_{1}= (\tilde{x}^1_1, \tilde{x}^2_1))\mathds{1}_{(m^1_1=f^1_1(\tilde{x}^1_1))}\mathds{1}_{(m^2_1=f^2_1(\tilde{x}^2_1))}} \notag \\
&= \frac{\mathds{1}_{\{x^1_1=\tilde{x}^1_1\}} }{\prob(X^1_{1}= \tilde{x}^1_1)\mathds{1}_{(m^1_1=f^1_1(\tilde{x}^1_1))}} \label{eeq:new33}
\end{align} 
where we used the fact that $(X^i_1,M^i_1)$ are independent of  $X^j_1$  for $i \neq j$. Similarly, the second term on the right hand side of  \eqref{eeq:new_11} can be written as 
\begin{align}
&\frac{\prob(Z^{er}_{1} = (\tilde{x}^1_1, \tilde{x}^2_1) |X^2_1 = x^1_1, M_1 =m_{1})\prob(M^1_1 = m^1_{1}|X^2_1=x^2_1)}{\prob(Z^{er}_{1}= (\tilde{x}^1_1, \tilde{x}^2_1),M_{1}=(m^1_1,m^1_2))} \notag \\
&= \frac{\mathds{1}_{\{x^2_1=\tilde{x}^2_1\}} }{\prob(X^2_{1}= \tilde{x}^2_1)\mathds{1}_{(m^2_1=f^2_1(\tilde{x}^2_1))}} \label{eeq:new44}
\end{align} 
Comparing \eqref{eeq:new22}, \eqref{eeq:new33} and \eqref{eeq:new44} establishes \eqref{eeq:new_11} for this case.

\textbf{Case III: } $Z^{er}_1=\phi $ and $M_1 = (m^1_1,m^2_1) \neq (0,0)$. \footnote{This case occurs only if $p_e>0$.}In this case, the left hand side of \eqref{eeq:new_11} can be written as
\begin{align}
&\frac{\prob(Z^{er}_{1}= \phi|X_1=x_1,M_{1}=(m^1_1,m^2_1))}{\prob(Z^{er}_{1}= \phi,M_{1}=(m^1_1,m^2_1))} = \frac{p_e}{p_e\prob(M_{1}=(m^1_1,m^2_1))} \notag \\
&= \frac{1}{\prob(M^1_1=m^1_1)\prob(M^2_1=m^2_1)} \label{eeq:n2}
\end{align}
Consider the first term on the right hand side of \eqref{eeq:new_11}. It can be written as
\begin{align}
&\frac{\prob(Z^{er}_{1} = \phi |X^1_1 = x^1_1, M_1 =m_{1})\prob(M^2_1 = m^2_{1}|X^1_1=x^1_1)}{\prob(Z^{er}_{1}= \phi,M_{1}=(m^1_1,m^2_1))} 
=  \frac{p_e\prob(M^2_1 = m^2_{1}) }{p_e\prob(M_{1}=(m^1_1,m^2_1)))} \notag \\
&= \frac{1}{\prob(M^1_{1}=m^1_1)} \label{eeq:n3}
\end{align} 
where we used the fact that $(X^i_1,M^i_1)$ are independent of  $X^j_1$  for $i \neq j$. Similarly, the second term on the right hand side of  \eqref{eeq:new_11} can be written as 
\begin{align}
&\frac{\prob(Z^{er}_{1} = \phi |X^2_1 = x^1_1, M_1 =m_{1})\prob(M^1_1 = m^1_{1}|X^2_1=x^2_1)}{\prob(Z^{er}_{1}=\phi ,M_{1}=(m^1_1,m^2_1))} 
= \frac{1}{\prob(M^2_{1}=m^2_1)} \label{eeq:n4}
\end{align} 
Comparing \eqref{eeq:n2}, \eqref{eeq:n3} and \eqref{eeq:n4} establishes \eqref{eeq:new_11} for this case.

\textbf{Induction step:} Assuming that Lemma \ref{LEM:INDEPEN} holds for  $t$, we need to show that it holds for $t+1$. At time $t+1$, before 
communication decisions are made, left hand side of equation \eqref{eq:indepen} can be written as
\begin{align}
& \prob(x_{1:t+1},u_{1:t}|z^{er}_{1:t},m_{1:t})=\prob(x^{1,2}_{t+1}|x_{1:t},u_{1:t},z^{er}_{1:t},m_{1:t})\prob(x_{1:t},u_{1:t}|z^{er}_{1:t},m_{1:t})  \notag\\
&=\prob(x^{2}_{t+1}|x^{1}_{t+1},x_{1:t},u_{1:t},z^{er}_{1:t},m_{1:t})\prob(x^1_{t+1}|x_{1:t},u_{1:t},z^{er}_{1:t},m_{1:t})P(x_{1:t},u_{1:t}|z^{er}_{1:t},m_{1:t}) \notag \\
 &=\prob(x^{2}_{t+1}|x^2_{t},u^2_{t})\prob(x^{1}_{t+1}|x^1_{t},u^1_{t})\prob(x^1_{1:t},u^1_{1:t}|z^{er}_{1:t},m_{1:t})\prob(x^2_{1:t},u^2_{1:t}|z^{er}_{1:t},m_{1:t}). \label{elnn1:inductive}
\end{align}

The first term (corresponding to $i=1$) on the right hand side of \eqref{eq:indepen} for $t+1$ can be written as 
\begin{align}
     &\prob(x^1_{1:t+1},u^1_{1:t}|z^{er}_{1:t},m_{1:t})=\prob(x^{1}_{t+1}|x^1_{1:t},u^1_{1:t},z^{er}_{1:t},m_{1:t})P(x^1_{1:t},u^1_{1:t}|z^{er}_{1:t},m_{1:t})\notag\\
     &=\prob(x^{1}_{t+1}|x^1_{t},u^1_{t})P(x^1_{1:t},u^1_{1:t}|z^{er}_{1:t},m_{1:t}). \label{ernn1:inductive}
\end{align}
Similarly for $i=2$,
\begin{equation}\label{ernn2:inductive}
    P(x^2_{1:t+1},u^2_{1:t}|z^{er}_{1:t},m_{1:t})=\prob(x^{2}_{t+1}|x^2_{t},u^2_{t})P(x^2_{1:t},u^2_{1:t}|z^{er}_{1:t},m_{1:t}).
\end{equation}
Comparing \eqref{elnn1:inductive}, \eqref{ernn1:inductive} and \eqref{ernn2:inductive} establishes \eqref{eq:indepen}   for time $t+1$.

For \eqref{eq:indepen2} at $t+1$, recall that $c_{{t+1}^+} = (z^{er}_{1:t+1},m_{1:t+1})$. The left hand side of \eqref{eq:indepen2} for $t+1$ can be written as 
\begin{align}
&\prob(x_{1:t+1},u_{1:t+1}|z^{er}_{1:t+1},m_{1:t+1})=
\frac{\prob(x_{1:t+1},u_{1:t+1},z^{er}_{t+1},m_{t+1}| z^{er}_{1:t},m_{1:t})}{\prob(z^{er}_{t+1},m_{t+1} | z^{er}_{1:t},m_{1:t})} \notag\\
&=\prob(u_{t+1}|x_{1:t+1},u_{1:t},z^{er}_{1:t+1},m_{1:t+1})\times\notag\\
&\frac{\prob(z^{er}_{t+1}|x_{1:t+1},u_{1:t},z^{er}_{1:t},m_{1:t+1})\prob(m_{t+1}|x_{1:t+1},u_{1:t},z^{er}_{1:t},m_{1:t})\prob(x_{1:t+1},u_{1:t}|z^{er}_{1:t},m_{1:t})}{\prob(z^{er}_{t+1},m_{t+1} | z^{er}_{1:t},m_{1:t})} \notag \\
 &=\mathds{1}_{(u^2_{t+1}=g^2_{t+1}(x^2_{1:t+1},u^2_{1:t},z^{er}_{1:t+1},m_{1:t+1}))}\mathds{1}_{(u^1_{t+1}=g^1_{t+1}(x^1_{1:t+1},u^1_{1:t},z^{er}_{1:t+1},m_{1:t+1}))}\prob(z^{er}_{t+1}|x_{1:t+1},u_{1:t},z^{er}_{1:t},m_{1:t+1})\times \notag\\
&\frac{\mathds{1}_{(m^2_{t+1}=f^2_{t+1}(x^2_{1:t+1},u^2_{1:t},z^{er}_{1:t},m_{1:t}))}\mathds{1}_{(m^1_{t+1}=f^1_{t+1}(x^1_{1:t+1},u^1_{1:t},z^{er}_{1:t},m_{1:t}))}\prob(x^1_{1:t+1},u^1_{1:t}|z^{er}_{1:t},m_{1:t})\prob(x^2_{1:t+1},u^2_{1:t}|z^{er}_{1:t},m_{1:t})}{\prob(z^{er}_{t+1},m_{t+1} | z^{er}_{1:t},m_{1:t})}. \label{erhss2:inductive}
\end{align}

The first term (corresponding to $i=1$) on the right hand side of \eqref{eq:indepen2} for $t+1$ can be written as 
\begin{align}
     &\prob(x^1_{1:t+1},u^1_{1:t+1}|z^{er}_{1:t+1},m_{1:t+1})=\frac{\prob(x^1_{1:t+1},u^1_{1:t+1},z^{er}_{t+1},m_{t+1}| z^{er}_{1:t},m_{1:t})}{\prob(z^{er}_{t+1},m_{t+1} | z^{er}_{1:t},m_{1:t})}\notag\\
     &=\prob(u^1_{t+1}|x^1_{1:t+1},u^1_{1:t},z^{er}_{1:t+1},m_{1:t+1}) \times \notag\\
     &\frac{\prob(z^{er}_{t+1}|x^1_{1:t+1},u^1_{1:t},z^{er}_{1:t},m_{1:t+1})\prob(m_{t+1}|x^1_{1:t+1},u^1_{1:t},z^{er}_{1:t},m_{1:t})\prob(x^1_{1:t+1},u^1_{1:t}|z^{er}_{1:t},m_{1:t})}{\prob(z^{er}_{t+1},m_{t+1} | z^{er}_{1:t},m_{1:t})}\notag\\
    &=\mathds{1}_{(u^1_{t+1}=g^1_{t+1}(x^1_{1:t+1},u^1_{1:t},z^{er}_{1:t+1},m_{1:t+1}))}\prob(z^{er}_{t+1}|x^1_{1:t+1},u^1_{1:t},z^{er}_{1:t},m_{1:t+1})\times \notag\\
&\frac{\mathds{1}_{(m^1_{t+1}=f^1_{t+1}(x^1_{1:t+1},u^1_{1:t},z^{er}_{1:t},m_{1:t})}\prob(m^2_{t+1}|x^1_{1:t+1},u^1_{1:t},z^{er}_{1:t},m_{1:t})\prob(x^1_{1:t+1},u^1_{1:t}|z^{er}_{1:t},m_{1:t})}{\prob(z^{er}_{t+1},m_{t+1} | z^{er}_{1:t},m_{1:t})}. \label{erhss2:comm}
\end{align}
Similarly, the second  term (corresponding to $i=2$) on the right hand side of \eqref{eq:indepen2} for $t+1$ can be written as 
\begin{align}\label{erhss2:t+1}
&\mathds{1}_{(u^2_{t+1}=g^2_{t+1}(x^2_{1:t+1},u^2_{1:t},z^{er}_{1:t+1},m_{1:t+1}))}\prob(z^{er}_{t+1}|x^2_{1:t+1},u^2_{1:t},z^{er}_{1:t},m_{1:t+1})\times\notag\\
&\frac{\prob(m^1_{t+1}|x^2_{1:t+1},u^2_{1:t},z^{er}_{1:t},m_{1:t})\mathds{1}_{(m^2_{t+1}=f^2_{t+1}(x^2_{1:t+1},u^2_{1:t},z^{er}_{1:t},,m_{1:t})}\prob(x^2_{1:t+1},u^2_{1:t}|z^{er}_{1:t},m_{1:t})}{\prob(z^{er}_{t+1},m_{t+1} | z^{er}_{1:t},m_{1:t})}.
\end{align}

Comparing \eqref{erhss2:inductive}, \eqref{erhss2:comm} and \eqref{erhss2:t+1}, it is clear that  we just need to prove that
\begin{align}
\frac{\prob(z^{er}_{t+1}|x_{1:t+1},u_{1:t},z^{er}_{1:t},m_{1:t+1})}{\prob(z^{er}_{t+1},m_{t+1} | z^{er}_{1:t},m_{1:t})} &= \frac{\prob(z^{er}_{t+1}|x^1_{1:t+1},u^1_{1:t},z^{er}_{1:t},m_{1:t+1})\prob(m^2_{t+1}|x^1_{1:t+1},u^1_{1:t},z_{1:t},m_{1:t})}{\prob(z^{er}_{t+1},m_{t+1} | z^{er}_{1:t},m_{1:t})} \times \notag\\ &\frac{\prob(z^{er}_{t+1}|x^2_{1:t+1},u^2_{1:t},z^{er}_{1:t},m_{1:t+1})\prob(m^1_{t+1}|x^2_{1:t+1},u^2_{1:t},z^{er}_{1:t},m_{1:t})}{\prob(z^{er}_{t+1},m_{t+1} | z^{er}_{1:t},m_{1:t})}. \label{eeq:neww_11}
\end{align}
in order to establish \eqref{eq:indepen2} for $t+1$. We consider three cases:

\textbf{Case I: } $Z^{er}_{t+1}=\phi$ and $M_{t+1} = (0,0)$. In this case, the left hand side of \eqref{eeq:neww_11} can be written as

\begin{align}
\frac{\prob(Z^{er}_{t+1}= \phi|x_{1:t+1},u_{1:t},z^{er}_{1:t},m_{1:t},M_{t+1}=(0,0))}{\prob(Z^{er}_{t+1}=\phi,M_{t+1} = (0,0)|z^{er}_{1:t},m_{1:t})} = \frac{1}{\prob(M^1_{t+1} = 0|z^{er}_{1:t},m_{1:t})\prob(M^2_{t+1}=0|z^{er}_{1:t},m_{1:t})} \label{eeq:ne6}
\end{align}
Similarly, the right hand side of  \eqref{eeq:neww_11} can be written as 
\begin{align}
 &\frac{\prob(M^2_{t+1}=0|x^1_{1:t+1},u^1_{1:t},z^{er}_{1:t},m_{1:t})}{\prob(M^1_{t+1} = 0|z^{er}_{1:t},m_{1:t})\prob(M^2_{t+1}=0|z^{er}_{1:t},m_{1:t})} \times \frac{\prob(M^1_{t+1}=0|x^2_{1:t+1},u^2_{1:t},z^{er}_{1:t},m_{1:t})}{\prob(M^1_{t+1} = 0|z^{er}_{1:t},m_{1:t})\prob(M^2_{t+1}=0|z^{er}_{1:t},m_{1:t})} \notag \\
 &= \frac{\prob(M^2_{t+1}=0|z^{er}_{1:t},m_{1:t})}{\prob(M^1_{t+1} = 0|z^{er}_{1:t},m_{1:t})\prob(M^2_{t+1}=0|z^{er}_{1:t},m_{1:t})} \times \frac{\prob(M^1_{t+1} = 0|z^{er}_{1:t},m_{1:t})}{\prob(M^1_{t+1} = 0|z^{er}_{1:t},m_{1:t})\prob(M^2_{t+1}=0|z^{er}_{1:t},m_{1:t})} \notag \\
 &=\frac{1}{\prob(M^1_{t+1} = 0|z^{er}_{1:t},m_{1:t})\prob(M^2_{t+1}=0|z^{er}_{1:t},m_{1:t})} \label{eeq:ne5}
\end{align}
where we used the fact that $M^i_{t+1}$ and $X^j_{t+1}$ are independent for $i \neq j$. Comparing \eqref{eeq:ne6} and \eqref{eeq:ne5} establishes \eqref{eeq:neww_11} for this case.

\textbf{Case II: } $Z^{er}_{t+1}=(\tilde{x}^1_{t+1}, \tilde{x}^2_{t+1}) $ and $M_{t+1} = (m^1_{t+1},m^2_{t+1}) \neq (0,0)$. \footnote{This case occurs only if $p_e<1$.}In this case, the left hand side of \eqref{eeq:neww_11} can be written as 
\begin{align}
&\frac{\prob(Z^{er}_{t+1}= (\tilde{x}^1_{t+1}, \tilde{x}^2_{t+1})|x_{1:t+1},u_{1:t},z^{er}_{1:t},m_{1:t},M_{t+1}=(m^1_{t+1},m^2_{t+1}))}{\prob(Z^{er}_{t+1}=(\tilde{x}^1_{t+1}, \tilde{x}^2_{t+1}),M_{t+1} = (m^1_{t+1},m^2_{t+1})|z^{er}_{1:t},m_{1:t})} \notag\\
&= \frac{(1-p_e)\mathds{1}_{\{x_{t+1}=(\tilde{x}^1_{t+1}, \tilde{x}^2_{t+1})\}}}{(1-p_e)\prob(X_{t+1}=(\tilde{x}^1_{t+1}, \tilde{x}^2_{t+1}),M_{t+1} = (m^1_{t+1},m^2_{t+1})|z^{er}_{1:t},m_{1:t})} \notag \\
&= \frac{\mathds{1}_{\{x_{t+1}=(\tilde{x}^1_{t+1}, \tilde{x}^2_{t+1})\}}}{\prob(X^1_{t+1}=\tilde{x}^1_{t+1}, M^1_{t+1}= m^1_{t+1}|z^{er}_{1:t},m_{1:t}) \prob(X^2_{t+1}=\tilde{x}^2_{t+1}, M^2_{t+1} =m^2_{t+1}|z^{er}_{1:t},m_{1:t})} \label{eeq:neww22}
\end{align}
where we used  \eqref{eq:indepen}   for time $t+1$.
Consider the first term on the right hand side of \eqref{eeq:neww_11}. It can be written as
\begin{align}
&\frac{\prob(Z^{er}_{t+1}=(\tilde{x}^1_{t+1}, \tilde{x}^2_{t+1})|x^1_{1:t+1},u^1_{1:t},z^{er}_{1:t},m_{1:t},M_{t+1}=(m^1_{t+1},m^2_{t+1}))\prob(M^2_{t+1}=m^2_{t+1}|x^1_{1:t+1},u^1_{1:t},z^{er}_{1:t},m_{1:t})}{\prob(Z^{er}_{t+1}=(\tilde{x}^1_{t+1}, \tilde{x}^2_{t+1}),M_{t+1} = (m^1_{t+1},m^2_{t+1})|z^{er}_{1:t},m_{1:t})} \notag\\
& =  \frac{(1-p_e)\mathds{1}_{\{x^1_{t+1}=\tilde{x}^1_{t+1}\}}\prob({X}^2_{t+1}=\tilde{x}^2_{t+1}|z^{er}_{1:t},m_{1:t},m^2_{t+1})\prob(m^2_{t+1}|z^{er}_{1:t},m_{1:t}) }{(1-p_e)\prob(X^1_{t+1}=\tilde{x}^1_{t+1}, M^1_{t+1}= m^1_{t+1}|z^{er}_{1:t},m_{1:t}) \prob(X^2_{t+1}=\tilde{x}^2_{t+1}, M^2_{t+1} =m^2_{t+1}|z^{er}_{1:t},m_{1:t})} \notag \\
& =  \frac{\mathds{1}_{\{x^1_{t+1}=\tilde{x}^1_{t+1}\}}\prob({X}^2_{t+1}=\tilde{x}^2_{t+1},M^2_{t+1}=m^2_{t+1}|z^{er}_{1:t},m_{1:t}) }{\prob(X^1_{t+1}=\tilde{x}^1_{t+1}, M^1_{t+1}= m^1_{t+1}|z^{er}_{1:t},m_{1:t}) \prob(X^2_{t+1}=\tilde{x}^2_{t+1}, M^2_{t+1} =m^2_{t+1}|z^{er}_{1:t},m_{1:t})} \notag\\
&=\frac{\mathds{1}_{\{x^1_{t+1}=\tilde{x}^1_{t+1}\}} }{\prob(X^1_{t+1}=\tilde{x}^1_{t+1}, M^1_{t+1}= m^1_{t+1}|z^{er}_{1:t},m_{1:t})}
\label{eeq:neww33}
\end{align} 
where we used the fact that $(X^i_{t+1},M^i_{t+1})$ are independent of  $X^j_{t+1}$  for $i \neq j$. Similarly, the second term on the right hand side of  \eqref{eeq:neww_11} can be written as 
\begin{align}
&\frac{\mathds{1}_{\{x^2_{t+1}=\tilde{x}^2_{t+1}\}} }{\prob(X^2_{t+1}=\tilde{x}^2_{t+1}, M^2_{t+1}= m^2_{t+1}|z^{er}_{1:t},m_{1:t})}\label{eeq:neww44}
\end{align} 
Comparing \eqref{eeq:neww22}, \eqref{eeq:neww33} and \eqref{eeq:neww44} establishes \eqref{eeq:neww_11} for this case.

\textbf{Case III: } $Z^{er}_{t+1}=\phi $ and $M_{t+1} = (m^1_{t+1},m^2_{t+1}) \neq (0,0)$. \footnote{This case occurs only if $p_e>0$.}In this case, the left hand side of \eqref{eeq:neww_11} can be written as

\begin{align}
&\frac{\prob(Z^{er}_{t+1}= \phi|x_{1:t+1},u_{1:t},z^{er}_{1:t},m_{1:t},M_{t+1}=(m^1_{t+1},m^2_{t+1}))}{\prob(Z^{er}_{t+1}=\phi,M_{t+1} = (m^1_{t+1},m^2_{t+1})|z^{er}_{1:t},m_{1:t})} = \frac{p_e}{p_e\prob(M^1_{t+1} = m^1_{t+1}|z^{er}_{1:t},m_{1:t})\prob(M^2_{t+1}=m^2_{t+1}|z^{er}_{1:t},m_{1:t})} \notag\\
&=\frac{1}{\prob(M^1_{t+1} = m^1_{t+1}|z^{er}_{1:t},m_{1:t})\prob(M^2_{t+1}=m^2_{t+1}|z^{er}_{1:t},m_{1:t})}\label{eeq:neww66}
\end{align}
Similarly, the right hand side of  \eqref{eeq:neww_11} can be written as 
\begin{align}
 &\frac{p_e\prob(M^2_{t+1}=m^2_{t+1}|x^1_{1:t+1},u^1_{1:t},z^{er}_{1:t},m_{1:t})}{p_e\prob(M^1_{t+1} = m^1_{t+1}|z^{er}_{1:t},m_{1:t})\prob(M^2_{t+1}=m^2_{t+1}|z^{er}_{1:t},m_{1:t})} \times \frac{p_e\prob(M^1_{t+1}=m^1_{t+1}|x^2_{1:t+1},u^2_{1:t},z^{er}_{1:t},m_{1:t})}{p_e\prob(M^1_{t+1} = m^1_{t+1}|z^{er}_{1:t},m_{1:t})\prob(M^2_{t+1}=m^2_{t+1}|z^{er}_{1:t},m_{1:t})} \notag \\
 &= \frac{\prob(M^2_{t+1}=m^2_{t+1}|z^{er}_{1:t},m_{1:t})}{\prob(M^1_{t+1} = m^1_{t+1}|z^{er}_{1:t},m_{1:t})\prob(M^2_{t+1}=m^2_{t+1}|z^{er}_{1:t},m_{1:t})} \times \frac{\prob(M^1_{t+1} = m^1_{t+1}|z^{er}_{1:t},m_{1:t})}{\prob(M^1_{t+1} = m^1_{t+1}|z^{er}_{1:t},m_{1:t})\prob(M^2_{t+1}=m^2_{t+1}|z^{er}_{1:t},m_{1:t})} \notag \\
 &=\frac{1}{\prob(M^1_{t+1} = m^1_{t+1}|z^{er}_{1:t},m_{1:t})\prob(M^2_{t+1}=m^2_{t+1}|z^{er}_{1:t},m_{1:t})} \label{eeq:neww55}
\end{align}
where we used the fact that $M^i_{t+1}$ and $X^j_{t+1}$ are independent for $i \neq j$. Comparing \eqref{eeq:neww66} and \eqref{eeq:neww55} establishes \eqref{eeq:neww_11} for this case.

%%Appendix B_erasure

\subsection{Proof of Proposition 1} \label{proof:propp1}

 We will prove the result for agent $i$. Throughout this proof, we fix agent $-i's$ communication and control strategies to be  $f^{-i}$,$g^{-i}$ (where f$^{-i}$,$g^{-i}$ are arbitrarily chosen). Define $R^i_t=(X^i_{t},Z^{er}_{1:t-1},M^{1,2}_{1:t-1})$ and $R^i_{t^+}=(X^i_{t},Z^{er}_{1:t},M^{1,2}_{1:t})$. Our proof will rely on  the following two facts:
 
  \textbf{Fact 1}: $\{R^i_1, R^i_{1^+}, R^i_2, R^i_{^2+}, ....R^i_T, R^i_{T^+}\}$ is a controlled Markov process for agent $i$. 
  More precisely, for any strategy choice $f^i, g^i$ of agent $i$, 
%Write (A) and (B),
\begin{align}
&\prob(R^i_{t^+}=\tilde{r}^i_{t^+}|R^i_{1:t}=r^i_{1:t},M^i_{1:t}=m^i_{1:t})=\prob(R^i_{t^+}=\tilde{r}^i_{t^+}|R^i_{t}=r^i_{t},M^i_{t}=m^i_{t}) \label{eq:mdppA} \\
& \prob(R^{i}_{t+1}=\tilde{r}^{i}_{t+1}|R^i_{1:{t^+}}=r^i_{1:{t^+}},U^i_{1:t}=u^i_{1:t})=\prob(R^{i}_{t+1}=\tilde{r}^{i}_{t+1}|R^i_{{t^+}}=r^i_{{t^+}},U^i_{t}=u^i_{t}) \label{eq:mdppB}
\end{align}
where the probabilities on the right hand side of  \eqref{eq:mdppA} and \eqref{eq:mdppB} do not depend on $f^i,g^i$.

  \textbf{Fact 2}: The costs at time $t$ satisfy
%%Write LHS of C = k^i_t(R^i_t, M^i_t)
%%Write LHS of D = k^i_{t^+}(R^i_t^+, U^i_t),
\begin{align}
& \ee[\rho\mathds{1}_{(M^{or}_t=1)}|R^i_{1:t}=r^i_{1:t},M^i_{1:t}=m^i_{1:t}]= \kappa^i_t(r^i_t, m^i_t) \label{eq:mdppC}\\
& \ee[c_t({X}_t,{U}_t)|R^i_{1:{t^+}}=r^i_{1:{t^+}},U^i_{1:t}=u^i_{1:t}] =\kappa^i_{t^+}(r^i_{t^+}, u^i_t) \label{eq:mdppD}
 \end{align}
 %(R^i_{t^+}, U^i_t)
 %=\k^i_{t^+}(R^i_t^+, U^i_t) 
  where the functions $\kappa^i_t, \kappa^i_{t^+}$ in \eqref{eq:mdppC} and \eqref{eq:mdppD}  do not depend on $f^i, g^i$.

Suppose that Facts 1 and 2 are true. Then, the strategy optimization problem for agent $i$ can be viewed as a MDP over $2T$ time steps (i.e. time steps $1, 1^+, 2, 2^+, \ldots, T, T^+$) with $R^i_t$ and $M^i_t$ as the state and action at time $t$; and $R^i_{t^+}$ and $U^i_t$ as the state and action for time $t^+$. Note that at time $t$, agent $i$ observes $R^i_t$, selects $M^i_t$ and the ``state'' transitions to $R^i_{t^+}$ according to Markovian dynamics (see \eqref{eq:mdppA}). Similarly, at time $t^+$, agent $i$ observes $R^i_{t^+}$, selects $U^i_t$ and the ``state'' transitions to $R^i_{t+1}$ according to Markovian dynamics (see \eqref{eq:mdppB}). Further, from agent $i$'s perspective, the cost at time $t$ depends on the current state and action (i.e. $R^i_t$ and $M^i_t$, see \eqref{eq:mdppC}) and the cost at time $t^+$ depends on the state and action at $t^+$ (i.e. $R^i_{t^+}$ and $U^i_t$, see \eqref{eq:mdppD}).  It then follows from standard MDP results that agent $i$'s strategy should be of the form: 
\begin{equation} 
   M^i_t=  \bar{f}^i_t(R^i_{t}) =  \bar{f}^i_t(X^i_{t},Z^{er}_{1:t-1},M^{1,2}_{1:t-1}),
\end{equation}

\begin{equation}  
   U^i_t=  \bar{g}^i_t(R_{t^+}) = \bar{g}^i_t(X^i_{t}, Z^{er}_{1:t},M^{1,2}_{1:t}),
\end{equation}
which establishes the result of the proposition (recall that $C_t = (Z^{er}_{1:t-1},M^{1,2}_{1:t-1})$ and $C_{t^+}=(Z^{er}_{1:t},M^{1,2}_{1:t}$).

We now prove Facts 1 and 2. 

(i) Let $\tilde{r}^i_{t^+} = ({x}^i_t,{z}^{er}_{1:t},{m}_{1:t})$ and $r^i_{1:t} = (x^i_{1:t},z^{er}_{1:t-1},m_{1:t-1})$. Then, the left hand side of \eqref{eq:mdppA} can be written as 
\begin{align}
 &\prob(R^i_{t^+}=({x}^i_t,{z}^{er}_{1:t},{m}_{1:t})|R^i_{1:t}=(x^i_{1:t},z^{er}_{1:t-1},m_{1:t-1}),M^i_{1:t}=m^i_{1:t})\notag\\
    &=\prob({Z}^{er}_t={z}^{er}_t|x^i_{1:t},z^{er}_{1:t-1},m_{1:t})\prob({M}^{-i}_t={m}^{-i}_t|x^i_{1:t},z^{er}_{1:t-1},m_{1:t-1},m^i_{t}) \notag\\
    &= \prob({Z}^{er}_t={z}^{er}_t|x^i_{1:t},z^{er}_{1:t-1},m_{1:t})\prob({M}^{-i}_t={m}^{-i}_t|z^{er}_{1:t-1},m_{1:t-1},m^i_{t})  \label{eq:markk4}
\end{align}
where \eqref{eq:markk4} follows from the conditional independence property of Lemma \ref{LEM:INDEPEN}.
 We can further simplify the first term in  \eqref{eq:markk4} for different cases as follows: 
 
\textbf{Case I:} when $Z^{er}_t=\tilde{x}^{1,2}_t$ and ${M}_t=(m^1_t,m^2_t) \neq (0,0)$
\begin{align}
    \prob(Z^{er}_t=\tilde{x}^{1,2}_t|x^1_{1:t},z^{er}_{1:t-1},m_{1:t})=(1-p_e)\mathds{1}_{(\tilde{x}^i_{t}={x}^i_{t})}\prob(\tilde{x}^{-i}_t|z^{er}_{1:t-1},m_{1:t}) \label{ztt:A1}
\end{align}
\textbf{Case II:} when $Z^{er}_t=\phi$ and ${M}_t=(0,0)$
\begin{align}
    \prob(Z^{er}_t=\phi|x^1_{1:t},z_{1:t-1},m_{1:t})=1 \label{ztt:A2}
\end{align}
\textbf{Case III:} when $Z^{er}_t=\phi$ and ${M}_t=(m^1_t,m^2_t) \neq (0,0)$
\begin{align}
    \prob(Z^{er}_t=\phi|x^1_{1:t},z_{1:t-1},m_{1:t})=p_e \label{ztt:A3}
\end{align}
We note that in all  cases above $x^i_{1:t-1}$ does not affect the probability.  Further, the probabilities in the three cases do not depend on agent $i$'s strategy.

Repeating the above steps for the right hand side of \eqref{eq:mdppA} establishes that the three sides of  \eqref{eq:mdppA} are equal.

(ii) \eqref{eq:mdppB} is a direct consequence of the Markovian state dynamics of agent $i$.

(iii) In \eqref{eq:mdppC}, it is straightforward to see that if $m^i_t =1$, then the left hand side is simply $\rho$. If, on the other hand, $m^i_t =0$, then the left hand side of \eqref{eq:mdppC} can be written as 
\begin{align}
&\rho\prob(M^{-i}_t =1|r^i_{1:t},m^i_{1:t}) = \rho\prob({M}^{-i}_t=1|x^i_{1:t},z^{er}_{1:t-1},m_{1:t-1},m^i_{t}) \notag \\
&=\rho\prob({M}^{-i}_t=1|z^{er}_{1:t-1},m_{1:t-1},m^i_{t}) \label{eq:markkA1}
\end{align}
where \eqref{eq:markkA1} follows from the conditional independence property of Lemma \ref{LEM:INDEPEN}. The right hand side of \eqref{eq:markkA1} is a function only of $r^i_t$ and $m^i_t$ and does not depend on agent $i$'s strategy. This completes the proof of \eqref{eq:mdppC}. 

(iv) To prove \eqref{eq:mdppD}, it suffices to show that 
\begin{align}
 \prob(x^{-i}_t,u^{-i}_t|(x^i_{1:t},z^{er}_{1:t},m_{1:t}),u^i_{1:t})=  \prob(x^{-i}_t,u^{-i}_t|(x^i_{t},z^{er}_{1:t},m_{1:t}),u^i_{t}) \label{eq:markkA2}
\end{align}
\eqref{eq:markkA2} follows from the conditional independence property of Lemma \ref{LEM:INDEPEN}.

%proof of belief update

\subsection{Proof of Lemma \ref{LEM:UPDATE}}\label{proof:Coordd}
Recall that at the beginning of time $t$, the common information is given by $C_{t}:=(Z^{er}_{1:t-1},M^{1,2}_{1:t-1})$ (see \eqref{commoninfo}).
At the end of time $t$, i.e. after the communication decisions are made at time $t$, the common information is given by $C_{t^+}:=(Z^{er}_{1:t},M^{1,2}_{1:t})$ (see \eqref{commoninfoplus}). Let $c_{t} := (z^{er}_{1:t-1},m_{1:t-1}^{1,2})$ and $c_{t^+} = c_{t+1} := (z^{er}_{1:t},m_{1:t}^{1,2})$ be realizations of $C_t$, $C_{t^+}$ and $C_{t+1}$ respectively. Let $\gamma_{1:t},\lambda_{1:t}$ be the realizations of the coordinator's prescriptions $\Gamma_{1:t},\Lambda_{1:t}$ up to time $t$. Let us assume that the realizations $c_{t+1},\gamma_{1:t},\lambda_{1:t}$ have non-zero probability. Let $\pi_t^i$, $\pi_{t^+}^i$ and $\pi_{t+1}^i$ be the corresponding realizations of the coordinator's beliefs $\Pi_t^i$, $\Pi_{t^+}^i$, and $\Pi_{t+1}^i$ respectively. These beliefs are given by
\begin{align}
\pi_{t}^i(x_t^i)&=\prob(X_t^i=x_t^i|C_t=(z^{er}_{1:t-1},m^{1,2}_{1:t-1}),\Gamma_{1:t-1}=\gamma_{1:t-1},\Lambda_{1:t-1}=\lambda_{1:t-1})\label{updatelemm3:1}\\
    \pi_{t^+}^i(x_{t^+}^i)&=\prob(X_{t}^i=x_{t}^i|C_{t^+}=(z^{er}_{1:t},m^{1,2}_{1:t}),\Gamma_{1:t}=\gamma_{1:t},\Lambda_{1:t-1}=\lambda_{1:t-1}).\label{updatelemm3:2}\\
    \pi_{t+1}^i(x_{t+1}^i)&=\prob(X_{t+1}^i=x_{t+1}^i|C_{t+1}=(z^{er}_{1:t},m^{1,2}_{1:t}),\Gamma_{1:t}=\gamma_{1:t},\Lambda_{1:t}=\lambda_{1:t}).
\end{align}

There are three possible cases: (i) $Z^{er}_t = (\tilde{x}_t^{1},\tilde{x}_t^{2})$ and $M_t=(m^1_t,m^2_t)\neq (0,0)$, (ii)$Z^{er}_t = \phi$ and $M_t=(m^1_t,m^2_t)\neq (0,0)$ and (iii) $Z^{er}_t = \phi$ and $M_t=(0,0)$. Let us analyze these three cases separately.

\textbf{Case I:} When $Z^{er}_t = (\tilde{x}_t^{1},\tilde{x}_t^{2})$ for some $(\tilde{x}_t^{1},\tilde{x}_t^{2}) \in \mathcal{X}^1\times \mathcal{X}^2$ and $M_t=(m^1_t,m^2_t)\neq (0,0)$, at least one of the agents must have decided to communicate at time $t$. Thus, we have
\begin{align}
    \pi_{t^+}^i(x_t^i)&=P(X_t^i=x_{t}^i|z^{er}_{1:t},m^{1,2}_{1:t},\gamma_{1:t},\lambda_{1:t-1})\notag\\
    &=\frac{P(X_t^i=x_{t}^i,Z^{er}_t=(\tilde{x}_t^{1},\tilde{x}_t^{2}),M^{1,2}_t=(m^1_t,m^2_t)|z^{er}_{1:t-1},m^{1,2}_{1:t-1},\gamma_{1:t},\lambda_{1:t-1})}{P(Z^{er}_t=(\tilde{x}_t^{1},\tilde{x}_t^{2}),M^{1,2}_t=(m^1_t,m^2_t)|z^{er}_{1:t-1},m^{1,2}_{1:t-1},\gamma_{1:t},\lambda_{1:t-1})}\notag\\
    &=\frac{(1-p_e)P(M^{1,2}_t=(m^1_t,m^2_t)|X_t=(\tilde{x}_t^{1},\tilde{x}_t^{2}),x_t^i,c_t,\gamma_{1:t},\lambda_{1:t-1}) P(X_t^i=x_{t}^i|X_t=(\tilde{x}_t^{1},\tilde{x}_t^{2}),c_t,\gamma_{1:t},\lambda_{1:t-1})   }{(1-p_e)P(M^{1,2}_t=(m^1_t,m^2_t)|X_t=(\tilde{x}_t^{1},\tilde{x}_t^{2}),x_t^i,c_t,\gamma_{1:t},\lambda_{1:t-1})  }\notag\\
    &=\mathds{1}_{(x^i_t=\Tilde{x}^i_t)}. \label{updatelemm3:3}
\end{align}

\textbf{Case II:} When ${Z}^{er}_t=\phi$, $M_t=(m^1_t,m^2_t)\neq (0,0)$ (see \eqref{erasure:model}). Using the Bayes' rule, we have
\begin{align}
    \pi_{t^+}^i(x_t^i)&=P(X_t^i=x_{t}^i|z^{er}_{1:t},m^{1,2}_{1:t},\gamma_{1:t},\lambda_{1:t-1})\notag\\
    &=\frac{P(X_t^i=x_{t}^i,Z^{er}_t=\phi,M^{1,2}_t=(m^1_t,m^2_t)|z^{er}_{1:t-1},m^{1,2}_{1:t-1},\gamma_{1:t},\lambda_{1:t-1})}{P(Z^{er}_t=\phi,M^{1,2}_t=(m^1_t,m^2_t)|z^{er}_{1:t-1},m^{1,2}_{1:t-1},\gamma_{1:t},\lambda_{1:t-1})}\notag\\
    &=\frac{P(Z^{er}_t=\phi|x_{t}^i,c_t,\gamma_{1:t},\lambda_{1:t-1},M^{1,2}_t=(m^1_t,m^2_t))P(M^{1,2}_t=(m^1_t,m^2_t)|x_t^i,c_t,\gamma_{1:t},\lambda_{1:t-1}) P(X_t^i=x_{t}^i|c_t,\gamma_{1:t},\lambda_{1:t-1})   }{\sum_{\hat{x}_t}P(Z^{er}_t=\phi|\hat{x}_{t}^i,c_t,\gamma_{1:t},\lambda_{1:t-1},M^{1,2}_t=((m^1_t,m^2_t))P(M^{1,2}_t=(m^1_t,m^2_t)|\hat{x}_t^i,c_t,\gamma_{1:t},\lambda_{1:t-1}) P(X_t^i=\hat{x}_{t}^i|c_t,\gamma_{1:t},\lambda_{1:t-1})   }\notag\\
    &\stackrel{a}{=}\frac{p_e P(M^{1,2}_t=(m^1_t,m^2_t)|x_t^i,c_t,\gamma_{1:t},\lambda_{1:t-1}) P(X_t^i=x_{t}^i|c_t,\gamma_{1:t},\lambda_{1:t-1})  }{\sum_{\hat{x}_t^i} p_e P(M^{1,2}_t=(m^1_t,m^2_t)|\hat{x}_t^i,c_t,\gamma_{1:t},\lambda_{1:t-1})P(X_t^i=\hat{x}_{t}^i|c_t,\gamma_{1:t},\lambda_{1:t-1}) }\notag\\
    &\stackrel{b}{=}\frac{P(M^{1,2}_t=(m^1_t,m^2_t)|x_t^i,c_t,\gamma_{1:t},\lambda_{1:t-1}) P(X_t^i=x_{t}^i|c_t,\gamma_{1:t-1},\lambda_{1:t-1})  }{\sum_{\hat{x}_t^i} P(M^{1,2}_t=(m^1_t,m^2_t)|\hat{x}_t^i,c_t,\gamma_{1:t},\lambda_{1:t-1})P(X_t^i=\hat{x}_{t}^i|c_t,\gamma_{1:t-1},\lambda_{1:t-1}) }\notag\\
    &\stackrel{c}{=}\frac{\mathds{1}_{(\gamma^i_t(x^i_t)=m^i_t)}\pi_t^i(x_t^i)}{\sum_{\hat{x}_t^i}\mathds{1}_{(\gamma^i_t(\hat{x}^i_t)=m^i_t)}\pi_t^i(\hat{x}_t^i)}.\label{updatelemm3:4}
\end{align}
In the display above, equation $(a)$ follows from the definition of Erasure model (see \eqref{erasure:model}). In equation $(b)$, we drop $\gamma_t$ from the term $ P(X_t^i=x_{t}^i|c_t,\gamma_{1:t},\lambda_{1:t-1})$ because $\gamma_t$ is a function of the rest of the terms in the conditioning given the coordinator's strategy. Due to Lemma \ref{LEM:INDEPEN}, $X_t^1$ and $X_t^2$ are independent conditioned on $C_t = c_t$. This conditional independence property and the fact that $M_t^i = \Gamma_t^i(X_t^i)$, (see \eqref{commact1}) leads to equation $(c)$.

\textbf{Case III:} When ${Z}^{er}_t=\phi$, ${M}^{1,2}_t=(0,0)$ (see \eqref{erasure:model}). Using the Bayes' rule, we have
\begin{align}
    \pi_{t^+}^i(x_t^i)&=P(X_t^i=x_{t}^i|z^{er}_{1:t},m^{1,2}_{1:t},\gamma_{1:t},\lambda_{1:t-1})\notag\\
    &=\frac{P(X_t^i=x_{t}^i,Z^{er}_t=\phi,M^{1,2}_t=(0,0)|z^{er}_{1:t-1},m^{1,2}_{1:t-1},\gamma_{1:t},\lambda_{1:t-1})}{P(Z^{er}_t=\phi,M^{1,2}_t=(0,0)|z^{er}_{1:t-1},m^{1,2}_{1:t-1},\gamma_{1:t},\lambda_{1:t-1})}\notag\\
    &=\frac{P(Z^{er}_t=\phi|x_{t}^i,c_t,\gamma_{1:t},\lambda_{1:t-1},M^{1,2}_t=(0,0))P(M^{1,2}_t=(0,0)|x_t^i,c_t,\gamma_{1:t},\lambda_{1:t-1}) P(X_t^i=x_{t}^i|c_t,\gamma_{1:t},\lambda_{1:t-1})   }{\sum_{\hat{x}_t}P(Z^{er}_t=\phi|\hat{x}_{t}^i,c_t,\gamma_{1:t},\lambda_{1:t-1},M^{1,2}_t=(0,0))P(M^{1,2}_t=(0,0)|\hat{x}_t^i,c_t,\gamma_{1:t},\lambda_{1:t-1}) P(X_t^i=\hat{x}_{t}^i|c_t,\gamma_{1:t},\lambda_{1:t-1})   }\notag\\
    &\stackrel{a}{=}\frac{P(M^{1,2}_t=(0,0)|x_t^i,c_t,\gamma_{1:t},\lambda_{1:t-1}) P(X_t^i=x_{t}^i|c_t,\gamma_{1:t},\lambda_{1:t-1})  }{\sum_{\hat{x}_t^i} P(M^{1,2}_t=(0,0)|\hat{x}_t^i,c_t,\gamma_{1:t},\lambda_{1:t-1})P(X_t^i=\hat{x}_{t}^i|c_t,\gamma_{1:t},\lambda_{1:t-1}) }\notag\\
    &\stackrel{b}{=}\frac{P(M^{1,2}_t=(0,0)|x_t^i,c_t,\gamma_{1:t},\lambda_{1:t-1}) P(X_t^i=x_{t}^i|c_t,\gamma_{1:t-1},\lambda_{1:t-1})  }{\sum_{\hat{x}_t^i} P(M^{1,2}_t=(0,0)|\hat{x}_t^i,c_t,\gamma_{1:t},\lambda_{1:t-1})P(X_t^i=\hat{x}_{t}^i|c_t,\gamma_{1:t-1},\lambda_{1:t-1}) }\notag\\
    &\stackrel{c}{=}\frac{\mathds{1}_{(\gamma^i_t(x^i_t)=0)}\pi_t^i(x_t^i)}{\sum_{\hat{x}_t^i}\mathds{1}_{(\gamma^i_t(\hat{x}^i_t)=0)}\pi_t^i(\hat{x}_t^i)}.\label{updatelemm3:4}
\end{align}
In the display above, equation $(a)$ follows from the fact that $Z^{er}_t = \phi$  if $M^{1,2}_t = (0,0)$ (see \eqref{erasure:model}). In equation $(b)$, we drop $\gamma_t$ from the term $ P(X_t^i=x_{t}^i|c_t,\gamma_{1:t},\lambda_{1:t-1})$ because $\gamma_t$ is a function of the rest of them terms in the conditioning given the coordinator's strategy. Due to Lemma \ref{LEM:INDEPEN}, $X_t^1$ and $X_t^2$ are independent conditioned\footnote{Given the coordinator's strategy, conditioning on $C_t$ and conditioning on $C_t,\Gamma_{1:t},\Lambda_{1:t-1}$ are the same because the prescriptions are functions of the common information.} on $C_t = c_t$. This conditional independence property and the fact that $M_t^i = \Gamma_t^i(X_t^i)$, (see \eqref{commact1}) leads to equation $(c)$.
Hence, we can update the coordinator's beliefs $\pi_{t^+}^i$ ($i=1,2$) using $\pi_t^i,\gamma_t^i$ and $z^{er}_t$ as:
\begin{equation}
  \pi^i_{t^+}(x_t^i)=\begin{cases}
    \frac{\mathds{1}_{(\gamma^i_t(x^i_t)=0)}\pi_t^i(x_t^i)}{\sum_{\hat{x}^i_t}\mathds{1}_{(\gamma^i_t(\hat{x}^i_t)=0)}\pi_t^i(\hat{x}_t^i)}, & \text{if $Z^{er}_t=\phi$ and $M_t=(0,0)$}.\\
    \frac{\mathds{1}_{(\gamma^i_t(x^i_t)=m^i_t)}\pi_t^i(x_t^i)}{\sum_{\hat{x}^i_t}\mathds{1}_{(\gamma^i_t(\hat{x}^i_t)=m^i_t)}\pi_t^i(\hat{x}_t^i)}, & \text{if $Z^{er}_t=\phi$ and $M_t=(m^1_t,m^2_t)$}.\\
    \mathds{1}_{(x^i_t=\Tilde{x}^i_t)}, & \text{if $Z^{er}_t =  (\tilde{x}_t^{1},\tilde{x}_t^{2})$ and $M_t=(m^1_t,m^2_t)$}.
  \end{cases}\label{updatelem3:11}
\end{equation}
We denote the update rule described above with $\eta^{er_i}_t$, i.e.
\begin{align}
\pi_{t^+}^i = \eta^{er_i}_t(\pi_t^i,\gamma_t^i,z^{er}_t).
\end{align}

Further, using the law of total probability, we have

\begin{align}
    &\pi_{t+1}^i(x_{t+1}^i)\\
    &=\prob(X_{t+1}^i =x_{t+1}^i|z^{er}_{1:t},m^{1,2}_{1:t},\gamma_{1:t},\lambda_{1:t})\\
    &= \sum_{x_t^i}\sum_{u_t^i}\prob(X_{t+1}^i = x_{t+1}^i|x_t^i,u_t^i,z^{er}_{1:t},m^{1,2}_{1:t},\gamma_{1:t},\lambda_{1:t})\prob(U_t^i = u_t^i|x_t^i,z^{er}_{1:t},m^{1,2}_{1:t},\gamma_{1:t},\lambda_{1:t})\prob(X_t^i  =x_{t}^i|z^{er}_{1:t},m^{1,2}_{1:t},\gamma_{1:t},\lambda_{1:t})\\
    &\stackrel{a}{=} \sum_{x_t^i}\sum_{u_t^i}\prob(X_{t+1}^i = x_{t+1}^i|x_t^i,u_t^i,z^{er}_{1:t},m^{1,2}_{1:t},\gamma_{1:t},\lambda_{1:t})\prob(U_t^i = u_t^i|x_t^i,z^{er}_{1:t},m^{1,2}_{1:t},\gamma_{1:t},\lambda_{1:t})\prob(X_t^i  =x_{t}^i|z^{er}_{1:t},m^{1,2}_{1:t},\gamma_{1:t},\lambda_{1:t-1})\\
    &\stackrel{b}{=} \sum_{x_t^i}\sum_{u_t^i}\prob(X_{t+1}^i = x_{t+1}^i|x_t^i,u_t^i,z^{er}_{1:t},m^{1,2}_{1:t},\gamma_{1:t},\lambda_{1:t})\mathds{1}_{(u_t^i=\lambda_t^i(x_t^i))}\pi_{t^+}^i(x_t^i)\\
    &\stackrel{c}{=} \sum_{x_t^i}\sum_{u_t^i}\prob(X_{t+1}^i = x_{t+1}^i|x_t^i,u_t^i)\mathds{1}_{(u_t^i=\lambda_t^i(x_t^i))}\pi_{t^+}^i(x_t^i).
\end{align}
In equation $(a)$ in the display above, we drop $\lambda_t$ from $\prob(X_t^i  =x_{t}^i|z^{er}_{1:t},m^{1,2}_{1:t},\gamma_{1:t},\lambda_{1:t})$ since $\lambda_t$ is a function of the rest of the terms in the conditioning given the coordinator's strategy. Equation $(b)$ follows from  \eqref{controlact1}. Equation $(c)$ follows from the system dynamics in \eqref{dyna}.
We denote the update rule described above with $\beta^{er_i}_t$, i.e.
\begin{align}
\pi_{t+1}^i = \beta^{er_i}_t(\pi_{t^+}^i,\lambda_t^i).
\end{align}

\end{document}